\renewcommand{\x}{\times}
\newcommand{\N}{\mathbb{N}}
\newcommand{\Cant}{2^\N}
\newcommand{\Cyl}[1]{\ensuremath{\llbracket #1 \rrbracket}}
\newcommand{\uhr}{\upharpoonright}
\newcommand{\Conc}{\ensuremath{\,\mbox{}^\frown\,}}
\theoremstyle{thmstyleone}%
\newtheorem{Thm}[subsection]{Theorem}
\newtheorem{Fact}[subsection]{Fact}
\newtheorem{Lemma}[subsection]{Lemma}
\newtheorem{Corollary}[subsection]{Corollary}
\newtheorem{Proposition}[subsection]{Proposition}
\theoremstyle{thmstyletwo}%
\theoremstyle{thmstylethree}%
\newtheorem{Def}[subsection]{Definition}
\newtheorem*{Notation*}{Notation}
\newtheorem{Construction}[subsection]{Construction}
\begin{document}

\title[Turing Degrees and Randomness for Continuous Measures]{Turing Degrees and Randomness for Continuous Measures}


\author[]{\fnm{Mingyang} \sur{Li}}\email{mingyang.psu@gmail.com}

\author[1]{\fnm{Jan} \sur{Reimann}}\email{jan.reimann@psu.edu}

\affil[1]{\orgdiv{Department of Mathematics}, \orgname{Penn State University}, \orgaddress{\city{University Park}, \postcode{16802
    }, \state{PA}, \country{USA}}}


\abstract{We study degree-theoretic properties of reals that are not random with respect to any continuous probability measure (NCR). To this end, we introduce a family of generalized Hausdorff measures based on the iterates of the ``dissipation'' function of a continuous measure and study the effective nullsets given by the corresponding Solovay tests. We introduce two constructions that preserve non-randomness with respect to a given continuous measure. This enables us to prove the existence of NCR reals in a number of Turing degrees. In particular, we show that every $\Delta^0_2$-degree contains an NCR element.}

\keywords{Algorithmic randomness, continuous measures, Turing degrees,
  recursively enumerable and above, moduli of computation}


\pacs[MSC Classification]{03D32, 03D25, 03D28}

\maketitle

\section{Introduction}

Martin-Löf's 1966 paper~\cite{martin1966definition} put the notion of an
individual random sequence on a sound mathematical footing. He gave a rigorous
definition of what it means for an infinite binary sequence (which we also refer
to as a \emph{real}) to be random with respect to a Bernoulli measure. Zvonkin
and Levin~\cite{zvonkin1970complexity} extended the definition to computable
measures on $\Cant$ and showed that every non-computable real $X \in \Cant$ that
is random with respect to computable probability measure is Turing equivalent to
a sequence random with respect to Lebesgue measure on $\Cant$, the measure
induced by a fair coin toss on $\{0,1\}$. This marked one of the first results
connecting randomness and the degrees of unsolvability. Over the following
decades, our understanding of how randomness (in the sense of Martin-Löf and
related, algorithmically based notions) and computability interact has grown
tremendously. Two recent monographs attest to
this~\cite{Nies:2009a,downey2010algorithmic}. However, most investigations
focused on the computational properties sequences that \emph{are} random with
respect to some kind of measure: Lebesgue measure (the vast majority of
results), but also other computable probability measures and Hausdorff measures.
This leaves the question whether we can characterize, in terms of computability
theory, the reals which do not exhibit any random behavior \emph{at all}. The
notion of ``being far from random'' so far has mostly been studied from the
point of view of \emph{triviality} and \emph{lowness}, which characterize reals
by having low initial-segment Kolmogorov complexity or by having little
derandomization power as oracles, respectively. We again refer to the
monographs~\cite{Nies:2009a,downey2010algorithmic} for an overview of a large
number of results in this direction.

This paper focuses on a different kind of question: {\em Given a real $X \in
  \Cant$, and a family of probability measures $\mathcal{M}$, is $X$ random with
respect to a measure in $\mathcal{M}$, and if not, what is the computational
power of $X$?}

Levin~\cite{levin1976uniform} was the first to define Martin-Löf randomness for
arbitrary probability measures. Levin defined \emph{uniform tests} of
randomness. Such a test is a left-enumerable function $t$ that maps pairs of
measures and reals to non-negative real numbers or infinity such that for any
probability measure $\mu$ on $\Cant$, $\int t(\mu,X) d\mu(X) \leq 1$. A sequence
$X$ is random for $\mu$ if for all uniform test $t$, $t(\mu,X) < \infty$. A
different approach to randomness with respect to arbitrary measures was given by
Reimann and Slaman~\cite{reimann2015measures}. Their approach represents
measures as reals and makes these available as oracles in relativized Martin-Löf
tests. We will present more details on this approach in Section~\ref{sec:2}. Day
and Miller~\cite{day2013randomness} showed that the two approaches are
equivalent, that is, they define the same set of random reals.

It is a trivial fact that any real $X$ that is an \emph{atom} of a measure
$\mu$, i.e., $\mu\{X\} >0$, is random with respect to $\mu$. Reimann and
Slaman~\cite{reimann2015measures} showed that a real $X$ is non-trivially random
with respect to some probability measure $\mu$ if and only if $X$ is
non-computable. In other words, if we do not further restrict the family of
probability measures, a real has \emph{some} non-trivial random content if and
only if it is not computable. Day and Miller~\cite{day2013randomness} gave an
alternative prof of this result using Levin's neutral measures (a single measure
relative to which \emph{every} sequence is random).

A more intricate structure emerges when we ask which sequences are random with
respect to a \emph{continuous}, i.e.\ non-atomic, probability measure. Reimann
and Slaman~\cite{reimann2015measures} showed that if a sequence $X \in \Cant$ is
not $\Delta^1_1$, it is random with respect to a continuous measure. We use the
term \emph{NCR} to denote those reals which are not random with respect to any
continuous measure. Kjos-Hanssen and Montalb\'{a}n~\cite{Montalban:2005a} showed
every member of a countable $\Pi^0_1$ set of sequence is NCR. Cenzer, Clote,
Smith, Soare, and Wainer~\cite{cenzer1986members} showed that members of
countable $\Pi^0_1$ sets of reals exist in every Turing degree
$\mathbf{0}^{(\alpha)}$, where $\alpha$ is any computable ordinal. Therefore,
the Kjos-Hanssen-Montalb\'{a}n result implies the set of NCR reals is cofinal in
$\Delta^1_1$ Turing-degrees.

On the other hand, Barmpalias, Greenberg, Montalb\'{a}n and
Slaman~\cite{barmpalias2012k} connected computational lowness with NCR by
showing that any real Turing below an incomplete r.e.\ degree is NCR. In
particular, every K-trivial is NCR. Their result makes use of a close
connection between the granularity function of a continuous measure (introduced
in the next section) and the settling time of a $\Delta^0_2$ real, which was
first observed by Reimann and Slaman~\cite{reimann-slaman:unpub}. The
granularity function (along with its ``companion'', the dissipation function of
a meaure), will also play a central role in this paper.

The previous results suggest an attempt to classify the $\Delta^1_1$ Turing
degress along the following lines:
\begin{enumerate}[(1)]
  \item Which Turing degrees consist entirely of NCR reals?
  \item Which Turing degrees do not contain any NCR reals?
  \item Which Turing degrees contain NCR reals?
\end{enumerate}

Haken~\cite{haken2014randomizing} studied these questions with respect to
stronger randomness notions for arbitrary (not necessarily continuous) measures,
in particular difference and weak-$n$-randomness for $n \geq 2$. He also linked
continuous randomness to higher randomness by showing that NCR reals are not
3-randomizable, i.e. for any (possibly atomic) measure $\mu$ and any
representation $R_\mu$ of $\mu$, NCR reals are not $\mu$-random with respect to
any Martin-L\"{o}f $\mu$-test relative to $R_\mu''$.

Regarding Question (2), Reimann and Slaman~\cite{reimann2018effective} showed
that every real Turing below a (Lebesgue) $3$-random real and not recursive in
$0'$ is random with respect to a continuous measure.

In this paper, we mainly focus on Question (3). We construct NCR reals in
certain families of Turing degrees. Our main technique is to recursively
approximate non-random reals using other non-random reals which are, in a
certain sense, even ``more non-random''. For this purpose, we quantify
non-randomness with respect to a given measure. We introduce a new randomness
test parameterized by a natural number $n$ which corresponds to the level of
non-randomness. We should point out that the level $n$ of non-randomness we
define in this paper is not related to the notion of Martin-L\"{o}f
$n$-randomness.

This paper is organized as follows. In Section~\ref{sec:2}, we introduce the new randomness test which quantifies the level of non-randomness and prove some basic facts about it which we will need later. In Sections~\ref{sec:3} and~\ref{sec:4}, respectively, we present two constructions of reals based on levels of non-randomness, one for reals recursively enumerable in and above (r.e.a.\ ) a given real, the other one for reals with a self-modulus.  Finally, in Section~\ref{sec:5}, we infer the existence of NCR reals in certain Turing degrees using the construction in Sections~\ref{sec:3} and~\ref{sec:4}. In particular, our constructions can be used to prove the following theorem.

\bigskip
\begin{restatable}{Thm}{mainone}~
  \label{Thm:main1}
  \begin{enumerate}[(a)]
    \item Any n-REA Turing degree contains an NCR real.
    \item Any self-modulus degree contains an NCR real.
  \end{enumerate}
\end{restatable}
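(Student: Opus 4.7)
The plan is to prove the two parts by explicit constructions suggested by Sections~\ref{sec:3} and~\ref{sec:4}, both building on the quantitative hierarchy of non-randomness levels introduced in Section~\ref{sec:2}. Throughout, I would measure non-randomness by the smallest $n$ for which a real is covered by the level-$n$ Solovay test of Section~\ref{sec:2} (uniformly in the underlying continuous measure $\mu$).

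For part (a), I would proceed by induction on $n$. The base case $n=0$ is the computable degree; any computable $X$ is NCR because, given any continuous measure $\mu$ with representation $R_\mu$, the oracle $R_\mu$ can compute cylinders $\Cyl{X\uhr k_n}$ of $\mu$-measure at most $2^{-n}$, yielding an $R_\mu$-relative Martin-L\"of test covering $X$. For the inductive step, fix an $(n+1)$-REA degree $\mathbf{d}$ with a representative $A = B \oplus W^B$, where $B$ is $n$-REA and $W^B$ is r.e.\ in $B$. By the induction hypothesis, $\deg(B)$ contains an NCR real $X$ of some finite level $k$, and by re-indexing the r.e.\ enumeration we may assume $A \equiv_T X \oplus W^X$ for some $W^X$ r.e.\ in $X$. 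The Section~\ref{sec:3} construction then uses the r.e.\ enumeration of $W^X$ as a schedule along which the initial segments of a new real $Z$ are committed; the choices are guided by $X$'s non-randomness at level $k$, producing $Z \equiv_T X \oplus W^X \in \mathbf{d}$ that is NCR at some level $k' \ge k$, which closes the induction.

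For part (b), given a self-modulus degree $\mathbf{d}$, I would fix $X \in \mathbf{d}$ with $X \equiv_T f$ for a modulus of convergence $f$ of some $X$-computable approximation. The construction of Section~\ref{sec:4} should use $f$ in the role that the r.e.\ enumeration played in part (a): the modulus provides exactly the convergence bound needed to commit initial segments of $Z$ at a rate compatible with non-randomness at a bounded level, producing $Z \equiv_T X \in \mathbf{d}$ that is NCR. Here there is no induction; the self-modulus property directly delivers the scheduling information.

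The main technical obstacle in both parts is the quantification over \emph{all} continuous measures $\mu$ and \emph{all} representations $R_\mu$. A construction tailored to a single fixed $\mu$ would be straightforward; the challenge is to choose the sequence of initial-segment commitments so that the resulting $Z$ is covered, uniformly in $\mu$, by a Solovay test of the appropriate level. This uniformity is precisely the purpose of the generalized Hausdorff-measure framework of Section~\ref{sec:2}, built from iterates of the dissipation function: it packages the non-randomness level into a single quantity that behaves well under the approximation constructions. Verifying that the approximation scheme preserves coverage by such a test against every continuous $\mu$ is where I would expect the bulk of the technical work to lie.
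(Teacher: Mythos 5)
Your overall strategy matches the paper's: prove part (a) by induction using the construction of Section~\ref{sec:3} (the r.e.a.\ construction), and prove part (b) using the construction of Section~\ref{sec:4} (the self-modulus construction), both tracked via the level-$n$ hierarchy of Section~\ref{sec:2}. However, in part (a) you have a genuine gap, and it is in the direction you least expect: you write that the construction produces $Z$ that is ``NCR at some level $k' \ge k$,'' but the quantitative content of the r.e.a.\ construction (Theorem~\ref{Thm:2n-n}) is that the level \emph{drops by half}: if $A$ is non-$\mu$-random of level $2n$, then $C$ is non-$\mu$-random only of level $n$. Recall that in the hierarchy of Section~\ref{sec:2}, larger level means more non-random (level-$n$ tests are a proper subfamily of level-$(n-1)$ tests), so each r.e.a.\ step you take uses up half of your ``budget'' of non-randomness. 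Your inductive hypothesis ``NCR of some finite level $k$'' is therefore too weak: starting from a finite $k$ and applying the construction you would reach level $1$ and then run out. The invariant that closes the induction is ``NCR of level $\omega$'' (non-$\mu$-random of level $n$ for all finite $n$, uniformly over continuous $\mu$). The base case, Proposition~\ref{prop:computable_omega}, establishes exactly this for $\mu$-computable reals, and level $\omega$ is preserved by Theorem~\ref{Thm:2n-n} because halving every finite level still leaves you with all finite levels. (Alternatively, for a fixed $n$-REA degree one can start with the finite level $2^n$ and halve $n$ times, but then the induction parameter must be carried explicitly.)

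For part (b), your sketch is essentially right, though you implicitly restrict to self-moduli arising as moduli of convergence of $X$-computable approximations, which is the $\Delta^0_2$ case; the paper's notion of self-modulus (modulus of computation) is more general, and Construction~\ref{Cons:2} together with Theorem~\ref{Thm:modulelevelinfty} works at that level of generality and directly yields NCR of level $\omega$. Note also that in part (b) there is no level loss and no induction, which your sketch correctly observes. Finally, be aware that your sketch defers the entire quantitative verification (that the Section~\ref{sec:3} and~\ref{sec:4} constructions interact correctly with the iterated-dissipation Hausdorff tests, uniformly in $\mu$) to ``where the bulk of the technical work lies''; that work is precisely Theorems~\ref{Thm:2n-n} and~\ref{Thm:modulelevelinfty}, and once you appeal to those your argument for part (a) must be repaired as indicated above.
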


The theorem in particular implies
\begin{restatable}{Corollary}{maintwo}
  \label{Thm:main2}
  Every $\Delta^0_2$ degree contains an NCR real.
\end{restatable}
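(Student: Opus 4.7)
The plan is to obtain Corollary~\ref{Thm:main2} as a direct consequence of Theorem~\ref{Thm:main1}(b). The sole reduction needed is to verify that every $\Delta^0_2$ Turing degree is a self-modulus degree---that is, contains a self-modulus; the main theorem then immediately provides an NCR real of that degree. Part (a) does not suffice on its own, since not every $\Delta^0_2$ degree is $n$-REA for any $n$ (the $n$-REA degrees form a proper subclass of $\Delta^0_2 \cup$ higher classes), so we must go through part (b).

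To establish that every $\Delta^0_2$ degree $\mathbf{d}$ contains a self-modulus, I would argue as follows. Let $A \in \mathbf{d}$, and fix a computable approximation $\{A_s\}_{s \in \N}$ of $A$. The canonical modulus function $m_A(n) = \mu s\,(\forall t \geq s)\,(A_t \uhr n = A_s \uhr n)$ is itself a self-modulus: any $g$ with $g(n) \geq m_A(n)$ determines $A \uhr n$ as $A_{g(n)} \uhr n$, and then recovers $m_A$ by running the computable approximation to termination. However, in general $m_A >_T A$, so $m_A$ need not lie in $\mathbf{d}$ itself. To place a self-modulus inside $\mathbf{d}$, one appeals to the classical fact from the theory of moduli of computation: every $\Delta^0_2$ degree contains a function for which the modulus of some computable approximation is Turing-equivalent to the function itself. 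For $A$ of r.e.\ degree this is immediate, since the settling-time function of a standard enumeration is $\equiv_T A$; in the general $\Delta^0_2$ case it follows by passing to a suitable recoding $B \equiv_T A$ whose approximation modulus is $\leq_T B$.

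Granting this step, the corollary follows at once: given any $\Delta^0_2$ degree $\mathbf{d}$, take the self-modulus $f \in \mathbf{d}$ just produced, and apply Theorem~\ref{Thm:main1}(b) to obtain an NCR real of degree $\mathbf{d}$. The main obstacle in this plan is the classical step of exhibiting a self-modulus in every $\Delta^0_2$ degree; this is subtle precisely because the most natural candidate---the approximation modulus of an arbitrary $\Delta^0_2$ set---typically sits strictly above the set in the Turing ordering, and some care is required to either recode $A$ or choose the approximation so that the resulting modulus stays within $\mathbf{d}$.
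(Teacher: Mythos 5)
Your proposal is correct and follows the same route as the paper: it reduces Corollary~\ref{Thm:main2} to Theorem~\ref{Thm:main1}(b) via the classical fact that every $\Delta^0_2$ degree contains a real with a self-modulus, which the paper simply cites (Soare, Theorem~5.6.6). Your extra discussion---noting that the settling-time modulus $m_A$ of an arbitrary $\Delta^0_2$ approximation need not lie in $\deg(A)$, that the r.e.\ case is immediate, and that the general case requires a recoding---is a more explicit unpacking of that same cited fact rather than a different argument, and you rightly flag the recoding step as the point that is not fully spelled out.
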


\subsection*{Acknowledgments}
We would like to thank Ted Slaman for many helpful discussions, and for first
observing the relation between the granularity function of a measure and the
settling time of a real. This crucial initial insight inspired much of the work
presented here.

\subsection*{Notation}
In the following, we list the notation used in this paper. The reader can refer to \cite{soare2016turing} for more details.

\begin{itemize}
  \item We use $\log$ to denote the binary logarithm.
  \item Lower case Roman letters denote natural numbers,
        except $f,g,h$ (and sometimes $s,t$), which denote functions.
  \item If $f$ is a function and $n \geq 1$, $f^{(n)}$ denotes its \emph{$n$-th iterate}, i.e.\ $f^{(1)} = f$ and $f^{(n+1)} = f \circ f^{(n)}$.
  \item We use capital Roman letters $X,Y,Z,A,B,C,R$ to denote set of natural numbers as well as infinite binary strings (reals).
  \item We use lowercase Greek letters $\sigma$,$\tau$ to denote finite binary strings. The length of a string $\sigma$ will be denoted by $\mid\sigma\mid$. We use $\Cyl{\sigma}$ to denote the set of all infinite binary strings extending $\sigma$.
  \item We use $\operatorname{dom}(f)$ to denote the domain of a partial recursive function $f$.
  \item We fix an effective enumeration $\{\Phi_i\}$ of all oracle Turing machines.
  \item We use $\Phi^A_e$ to denote the machine with oracle $A$ and G{\"o}del number $e$. We write $\Phi^A_e(x) = y$ if the machine halts on input $x$ and outputs $y$. If $\Phi^A_e(x)$ does not halt, we write $\Phi^A_e(x) = \uparrow$. Finally, we let $W^A_e = \operatorname{dom}(\Phi^A_e)$.
  \item We use $\Phi^A_{e,k}(x)$ to denote the $e$-th machine with oracle $A$  running for $k$ steps. Without loss of generality, $\Phi^A_{e,k}(x)=\uparrow$ when $x>k$. We put $W^A_{e,s} =\operatorname{dom}(\Phi^A_{e,s})\uhr_s$.
  \item We use $\sigma \Conc \tau$ to denote the concatenation of strings $\sigma$ and $\tau$.
\end{itemize}

\section{Quantifying non-randomness}
\label{sec:2}

In this section, we first briefly review the definition of randomness with respect to arbitrary measures given by \cite{reimann2015measures}. We refer the readers for \cite{reimann2015measures} and \cite{day2013randomness} for more details.

First of all, we define a metrical structure on the set of all probability measure on $2^\omega$.
\begin{Def}
  For any probability measures $\mu$ and $\nu$ on $2^\omega$, define the \textit{distance function} $d(\mu,\nu)$ as
  $$d(\mu,\nu)=\sum_{\sigma\in 2^{<\omega}} 2^{-\mid\sigma\mid}\mid\mu\Cyl{\sigma}-\nu\Cyl{\sigma}\mid.$$
\end{Def}
Let ${\mathcal{P}}(2^\omega)$ be the set of all probability measures on
$2^\omega$, and let $\mu_\sigma$ be the measure which is identical with the
characteristic function of the principal filter of $\{\sigma\Conc 0^\omega\}$, that is, for any
$H\subset 2^\omega$,
\begin{equation*}
  \mu_\sigma(H) = \begin{cases}
    1 & \text{if $\sigma\Conc 0^\omega \in H$,}     \\
    0 & \text{if $\sigma\Conc 0^\omega \notin H$. }
  \end{cases}
\end{equation*}

The following properties hold.

\begin{Proposition}~
  \begin{enumerate}[(1)]
    \item $d(\mu,\nu)$ is a metric on ${\mathcal{P}}(2^\omega)$.
    \item ${\mathcal{P}}(2^\omega)$ with the topology generated by $d(\mu,\nu)$ is a Polish space.
    \item The closure of all $\mu_\sigma$ under binary average forms a countable dense subset of $({\mathcal{P}}(2^\omega),d)$.
  \end{enumerate}
\end{Proposition}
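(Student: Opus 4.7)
The plan is to treat the three clauses in order, with clause (3) supplying the separability half of (2). A basic estimate used throughout is the following level-by-level bound: for any fixed length $n$ and any two probability measures $\mu,\nu$, $\sum_{|\sigma|=n}|\mu\Cyl{\sigma}-\nu\Cyl{\sigma}|\le 2$ (since each side is $\le 1$), so the contribution of length-$n$ strings to $d(\mu,\nu)$ is at most $2^{-n+1}$. For (1), symmetry and non-negativity are immediate, the triangle inequality follows termwise from the triangle inequality for the absolute value, and the level estimate gives $d(\mu,\nu)\le\sum_{n\ge 1}2^{-n+1}<\infty$. The only step with real content is positive definiteness: if $d(\mu,\nu)=0$ then $\mu$ and $\nu$ agree on every cylinder, and since the cylinders form a $\pi$-system generating the Borel $\sigma$-algebra, Caratheodory uniqueness yields $\mu=\nu$.

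For (2), separability is deferred to (3), and the main work is completeness. Given a Cauchy sequence $\{\mu_k\}$ in $d$, the inequality $2^{-|\sigma|}|\mu_k\Cyl{\sigma}-\mu_\ell\Cyl{\sigma}|\le d(\mu_k,\mu_\ell)$ shows that each $\{\mu_k\Cyl{\sigma}\}$ is Cauchy in $[0,1]$; let $F(\sigma)$ be its limit. The identities $F(\emptyset)=1$ and $F(\sigma)=F(\sigma\Conc 0)+F(\sigma\Conc 1)$ pass to the limit, so $F$ defines a finitely additive probability on the clopen algebra of $2^\omega$. Compactness of $2^\omega$ then forces countable additivity on this algebra (every countable clopen cover of a clopen set has a finite subcover), so Caratheodory extension produces $\mu\in\mathcal{P}(2^\omega)$ with $\mu\Cyl{\sigma}=F(\sigma)$ for all $\sigma$. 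Convergence $\mu_k\to\mu$ in $d$ follows by splitting the defining series at a level $n$ large enough that its tail is below $\epsilon/2$ uniformly in $k$ (by the level estimate) and then choosing $k$ so that the finite head is also below $\epsilon/2$.

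For (3), the closure of $\{\mu_\sigma:\sigma\in 2^{<\omega}\}$ under binary averages is exactly the countable set of finite convex combinations $\sum_i q_i\mu_{\sigma_i}$ with dyadic rational weights summing to $1$. To prove density, fix $\mu\in\mathcal{P}(2^\omega)$ and $\epsilon>0$, pick $n$ with $2^{-n+1}<\epsilon/2$, and choose dyadic rationals $q_\sigma$ for $|\sigma|=n$ with $\sum_{|\sigma|=n}q_\sigma=1$ and $\max_{|\sigma|=n}|q_\sigma-\mu\Cyl{\sigma}|$ small enough to force the head contribution below $\epsilon/2$; this is possible because the dyadic probability vectors are dense in the probability simplex. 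Setting $\nu=\sum_{|\sigma|=n}q_\sigma\mu_\sigma$, the contribution to $d(\mu,\nu)$ from cylinders $\Cyl{\tau}$ with $|\tau|\le n$ is controlled by swapping the order of summation over $\tau$ and its length-$n$ extensions $\sigma$, while the tail past level $n$ is bounded by $2^{-n+1}$ via the level estimate, yielding $d(\mu,\nu)<\epsilon$. The step I expect to require the most care is the countable-additivity argument underpinning completeness in (2); everything else reduces to direct estimates with $d$ and the $\pi$-system uniqueness theorem.
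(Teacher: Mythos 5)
The paper does not prove this proposition; it defers to Reimann--Slaman~\cite{reimann2015measures} and Day--Miller~\cite{day2013randomness}, so there is no in-paper argument to compare yours against. That said, your proof is correct and is essentially the standard one used in those references: the termwise level estimate $\sum_{|\sigma|=n}|\mu\Cyl{\sigma}-\nu\Cyl{\sigma}|\le 2$ drives all three clauses; positive definiteness via the $\pi$-system of cylinders, completeness via extraction of the pointwise limit $F$ on the clopen algebra plus the compactness argument for countable additivity, and density via dyadic approximation of the length-$n$ marginal. Two small remarks. First, your characterization of the ``closure under binary average'' as the set of dyadic-rational convex combinations $\sum_i q_i\mu_{\sigma_i}$ is correct and worth the explicit justification you give implicitly (any weight vector with denominator $2^k$ arises from a depth-$k$ averaging tree, and repeats are allowed since $\frac{\mu+\mu}{2}=\mu$). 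Second, one can get (2) slightly more cheaply by proving that $(\mathcal{P}(2^\omega),d)$ is in fact \emph{compact}: given any sequence $\{\mu_k\}$, diagonalize over the countably many cylinders to extract a subsequence along which every $\mu_k\Cyl{\sigma}$ converges, then run exactly your Carath\'eodory step; compactness plus the countable dense set from (3) gives Polishness without a separate completeness argument, and is the form in which the fact is typically used. Your route (completeness proved directly) is just as valid, and your identification of the compactness-forces-countable-additivity step as the load-bearing one is accurate.
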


For the proof, refer to \cite{reimann2015measures} or \cite{day2013randomness}.
The proposition allows for  representing any element of ${\mathcal{P}}(2^\omega)$ by a Cauchy sequences of elements in (3). Let us assume $\{\mu_0,\mu_1,\mu_2,\ldots\}$ is a fixed effective enumeration of the set in (3). Any sequence of measures in (3) can then be represented by its sequence of indices in $\{\mu_0,\mu_1,\mu_2,\ldots\}$. If one develops this correspondence carefully it is possible to prove the following~\cite{day2013randomness}.






\begin{Proposition}
  \label{Pro:repre}
  There exists a Turing functional $\Gamma$, such that for any real $X$ and any natural number $n$, $\Gamma^X(n)\downarrow$, and the following hold.
  \begin{enumerate}
    \item $d(\mu_{\Gamma^X(n)},\mu_{\Gamma^X(n+1)})\leq 2^{-n}$;
    \item the function $\rho: 2^\omega\rightarrow {\mathcal{P}}(2^\omega)$ defined as
          $$\rho(X)=\lim_n \mu_{\Gamma^X(n)}$$
          is a continuous surjection.

    \item for any $X$, $\rho^{-1}(\{\rho(X)\})$ is $\Pi^0_1(X)$.
  \end{enumerate}
\end{Proposition}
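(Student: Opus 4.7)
The plan is for $\Gamma$ to read $X$ in blocks, each block encoding a candidate index into the dense enumeration $\{\mu_0,\mu_1,\mu_2,\ldots\}$, with a cautious defaulting mechanism that enforces the Cauchy condition by construction. Set $\Gamma^X(0)$ to the index of some fixed default measure, and given $\Gamma^X(n)=j$, interpret the next block of $X$ as an index $k$. Because the dense set is the closure under dyadic averaging of the $\mu_\sigma$, the distance $d(\mu_k,\mu_j)$ is a computable real uniformly in $k,j$; so for a suitable computable timeout, we effectively test whether a rational approximation of $d(\mu_k,\mu_j)$ lies safely below $2^{-n}$. If the test passes, set $\Gamma^X(n+1)=k$; otherwise set $\Gamma^X(n+1)=j$. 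This makes $\Gamma^X$ total and enforces (1) by construction.

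For (2), continuity of $\rho$ follows from the fact that $\Gamma$ is a Turing functional together with the telescoping bound $d(\rho(X),\mu_{\Gamma^X(n)})\leq 2^{-n+1}$ implied by (1): varying $X$ past the use of $\Gamma^X(n)$ perturbs $\rho(X)$ by at most $O(2^{-n})$. For surjectivity, given $\mu\in\mathcal{P}(2^\omega)$, use density to pick indices $i_0,i_1,\ldots$ with $d(\mu_{i_n},\mu)<2^{-n-3}$, whence $d(\mu_{i_n},\mu_{i_{n+1}})<2^{-n-1}$ with a definite margin below $2^{-n}$. The verification step then passes for each $n$, so encoding $(i_0,i_1,\ldots)$ into $X$ in the block format $\Gamma$ expects forces $\Gamma^X(n)=i_n$, hence $\rho(X)=\mu$.

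For (3), applying the telescoping bound to both $X$ and $Y$ shows that $\rho(Y)=\rho(X)$ if and only if $d(\mu_{\Gamma^Y(n)},\mu_{\Gamma^X(n)})\leq 2^{-n+2}$ for every $n$. Each of these is a $\Pi^0_1$ condition on $Y$ relative to $X$, uniformly in $n$: $\Gamma^Y(n)$ depends on only finitely many bits of $Y$, and for fixed indices $k,j$, the statement $d(\mu_k,\mu_j)\leq 2^{-n+2}$ is $\Pi^0_1$ because $d$ is a computable real (the strict inequality $d(\mu_k,\mu_j)>2^{-n+2}$ is semi-decidable). Taking the conjunction over $n$ preserves $\Pi^0_1$, so $\rho^{-1}(\{\rho(X)\})$ is a $\Pi^0_1(X)$ subset of $2^\omega$.

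The step requiring the most care is aligning the cautious, semi-decision-based verification in the construction with surjectivity: the defaulting mechanism forces the effective acceptance threshold to sit strictly below the nominal $2^{-n}$, so density alone is not enough. We must choose the approximating indices with a definite margin (here $2^{-n-3}$) so that they are robustly accepted regardless of how tight the verification is tuned. Once this margin is built into both the construction and the surjectivity argument, the three properties fall into place uniformly.
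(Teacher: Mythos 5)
The paper does not prove this proposition itself; it cites Day and Miller, whose construction is essentially the one you sketch (interpret $X$ as a candidate Cauchy sequence of indices, defaulting whenever the next index fails a decidable closeness test). So your approach is the right one and items (1), (3), and the continuity half of (2) are sound. However, your surjectivity argument has a concrete gap, and it is even internally inconsistent with your construction: you fix $\Gamma^X(0)$ to be a default measure independent of $X$, yet the surjectivity argument concludes "$\Gamma^X(n) = i_n$" for \emph{all} $n$, including $n=0$. More seriously, with a fixed anchor $\Gamma^X(0) = j_0$ and strict-margin thresholds at each stage, the reachable set $\{\rho(X) : X\in 2^\omega\}$ lies inside the ball of radius $\sum_{n\geq 0}(2^{-n}-\varepsilon_n) < 2$ around $\mu_{j_0}$. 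But the diameter of $(\mathcal{P}(2^\omega),d)$ is exactly $2$: e.g.\ $d(\delta_{0^\omega},\delta_{10^\omega}) = 2$, since the two point masses have disjoint supports at every positive level. So if your default is, say, $\mu_{\langle\rangle}=\delta_{0^\omega}$, then $\mu_1=\delta_{10^\omega}$ (and everything sufficiently near it) is unreachable, and $\rho$ is not onto.

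The fix is small but necessary: read $\Gamma^X(0)$ freely from the first block of $X$ with no admissibility check, and only apply the test at transitions $n\mapsto n+1$. Then your choice of approximants $i_n$ with $d(\mu_{i_n},\mu)<2^{-n-3}$ gives $d(\mu_{i_n},\mu_{i_{n+1}})<2^{-n-2}$, which passes the stage-$n$ test with a robust margin, and encoding $(i_0,i_1,\ldots)$ indeed forces $\Gamma^X(n)=i_n$ for all $n$, so $\rho(X)=\mu$. With this change your argument for (3) is unaffected (the equivalence $\rho(Y)=\rho(X)\iff \forall n\, d(\mu_{\Gamma^Y(n)},\mu_{\Gamma^X(n)})\leq 2^{-n+2}$ is exactly right, and each conjunct is $\Pi^0_1(X)$ because $d(\mu_k,\mu_j)$ is a computable real for measures in the dense family, making strict exceedance $\Sigma^0_1$). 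One further small point worth making explicit: decidability of the stage-$n$ test relies on $d(\mu_k,\mu_j)$ being computable from above \emph{and} below; for finite dyadic averages of $\mu_\sigma$'s this holds because the tail $\sum_{|\sigma|\geq L}$ is effectively bounded by the (finite, computable) number of support points, which you should state rather than appeal only to a "suitable computable timeout."
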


From now on, we fix a mapping $\rho$ as given by Proposition~\ref{Pro:repre}.

\begin{Def}
  A \textit{representation} of a probability measure $\mu$ is a real $R$ such that $\rho(R)=\mu$.
\end{Def}

Note that for a given probability measure $\mu$, its representation might not be unique. However, any representation of $\mu$ can compute a two-sided effective approximation to $\mu\Cyl{\sigma}$, for any given $\sigma$.

Using representations as oracles, one can define randomness tests and computability relative to a given probability measure.

\begin{Def}
  A \textit{Martin-L\"of-$\mu$-test relative to a representation $R_\mu$(or simply
    Martin-L\"of-$R_\mu$-test)} is a sequence of uniformly $\Sigma^0_1(R_\mu)$ sets
  $(V_n)_{n\in \N}$ such that for all $n$, $\mu(V_n)\leq2^{-n}$. \\
  $X\in 2^\omega$ \textit{passes} a Martin-L\"of-$R_\mu$-test if $X\notin\cap_{n\in \omega} V_n$.\\
  For any probability measure $\mu$ on $2^\omega$ and a representation $R_\mu$ of $\mu$, $X\in 2^\omega$ is \textit{$R_\mu$-$\mu$-random} if $X$ passes every Martin-L\"{o}f-$\mu$ test relative to $R_\mu$.
\end{Def}

\begin{Def}
  A set or function is \textit{$\mu$-computable ($\mu$-c.e.)} if it is computable (computably enumerable) in any representation of $\mu$.
\end{Def}

Finally, we can formally introduce the property NCR (not random w.r.t.\ any continuous measure).

\begin{Def}
  A measure $\mu$ is \textit{continuous} if every singleton has $\mu$-measure $0$.
  $X\in 2^\omega$ is \textit{NCR} if and only if $X$ is not
  \textit{$R_\mu$-$\mu$-random} w.r.t.\ any continuous probability measure $\mu$
  and any representation $R_\mu$ of $\mu$.
\end{Def}

Next, we introduce a new family of randomness tests. We will need two functions
for this, the dissipation function $g$ and the granularity $h$ of a measure.

\begin{Def}
  For any continuous probability measure $\mu$, define the \emph{granularity
    function} $g_\mu(n):=\min\{l: \forall \mid\sigma\mid=l, \mu\Cyl{\sigma}<2^{-n}\}$,
  and define the \emph{dissipation function} $h_\mu(l):=\max\{n:\forall
    \mid\sigma\mid=l, \mu\Cyl{\sigma}<2^{-n+1}\}$.
\end{Def}

We simply write $g(n)$ or $h(n)$ when the underlying measure is clear. The
function $g$ is well-defined by compactness of $2^\omega$. For any natural
number $n$, $g(n)$ gives a length $l$ by which the measure of any cylinder set
of length $l$ is less than $2^{-n}$. Given a length $l$, the dissipation
function $h(l)$ gives the binary upper bound of the measure for cylinder sets of
length $l$.

\begin{Fact} The following are some basic properties facts about $g$ and $h$.
  \label{Fact:gh}
  \begin{enumerate}[(1)]
    \item $\forall n, n<g(n)<g(n+1)<g(g(n+1))$
    \item $\forall l, h(l)\leq h(l+1)\leq h(l)+1\leq l+1$
    \item $\forall n, h(g(n))=n+1$
    \item $\lim_{l\rightarrow \infty}h(l)=\infty$
    \item $g \equiv_T h$
  \end{enumerate}
\end{Fact}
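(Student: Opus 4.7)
The plan is to unwind each clause directly from the two defining formulas for $g$ and $h$, using only the additivity $\mu\Cyl{\sigma}=\mu\Cyl{\sigma\Conc 0}+\mu\Cyl{\sigma\Conc 1}$ and the fact that the $2^l$ cylinders of length $l$ sum to $1$. Before starting I would note that $g$ is well-defined: if for some $n$ no witnessing $l$ existed, then the subtree $T_n=\{\sigma:\mu\Cyl{\sigma}\geq 2^{-n}\}$ would be infinite and finitely branching, so K\"onig's lemma would give a path $X$, and continuity of measure from above would force $\mu\{X\}\geq 2^{-n}$, contradicting continuity of $\mu$.

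For (1), pigeonhole gives a length-$n$ cylinder of measure $\geq 2^{-n}$, so $g(n)>n$. By minimality of $g(n)$, I pick a $\tau$ of length $g(n)-1$ with $\mu\Cyl{\tau}\geq 2^{-n}$; splitting $\tau$ yields a child of measure $\geq 2^{-(n+1)}$, so no length $\leq g(n)$ works in the definition of $g(n+1)$, giving $g(n)<g(n+1)$. The last inequality $g(n+1)<g(g(n+1))$ is an instance of $m<g(m)$. For (2), $h(l)\leq h(l+1)$ is monotonicity (a length-$(l+1)$ cylinder is contained in a length-$l$ cylinder, so any uniform upper bound at level $l$ transfers to level $l+1$), $h(l+1)\leq h(l)+1$ follows by summing two children to bound the parent, and $h(l)+1\leq l+1$ is the same pigeonhole as in (1). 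For (3), the definition of $g(n)$ immediately gives $h(g(n))\geq n+1$, and the child produced in (1) has measure $\geq 2^{-(n+1)}$ at length $g(n)$, so $h(g(n))\leq n+1$.

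For (4) I would combine the monotonicity from (2) with (3): whenever $l\geq g(n)$, we have $h(l)\geq h(g(n))=n+1$, so $h(l)\to\infty$. For (5), the key bi-implication is $h(l)\geq n\iff g(n-1)\leq l$ (for $n\geq 1$), a direct translation of the two definitions. Thus from $g$ one computes $h(l)$ by searching for the largest $n$ with $g(n-1)\leq l$ (bounded above by $l+1$ thanks to (2)), and from $h$ one computes $g(n)$ as the least $l$ with $h(l)\geq n+1$, a search guaranteed to halt by (4). The only genuinely substantive point in the whole argument is well-definedness of $g$ via K\"onig's lemma and continuity of $\mu$; after that, every clause reduces to the splitting identity and a one-line pigeonhole count, so no part should require a long calculation.
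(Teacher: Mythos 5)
Your proof is correct and follows essentially the same approach as the paper: the paper dismisses (1)--(4) as following ``directly from the definition or via an easy induction,'' and for (5) it records exactly the bi-implication you use, namely that $h(l)$ is the largest $n$ with $g(n-1)\leq l$ and $g(n)$ is the least $l$ with $h(l)=n+1$. You have simply filled in the elementary pigeonhole and splitting arguments the paper leaves implicit (including well-definedness of $g$, which the paper attributes to compactness of $2^\omega$, equivalent to your K\"onig's lemma argument).
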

\begin{proof}
  Properties (1)-(4) follow directly from the definition or via an easy induction.

  For (5), $h(l)$ equals the largest $n$ such that $g(n-1)\leq l$, and $g(n)$ is equal to the least $l$ such that  $h(l)=n+1$, so $g \equiv_T h$.
\end{proof}

Notice that $g_\mu$ and $h_\mu$ are in general only $\mu$-c.e. But we have the following lemma, which will be useful in Section~\ref{sec:4}.
\begin{Lemma}
  \label{Lemma:g*}
  For any continuous measure $\mu$, there are $\mu$-computable, non-decreasing functions
  $\hat{h}_\mu(n),\hat{g}_\mu(n)$ such that for all $n$,
  \begin{gather*}
    h_\mu(n)\leq \hat{h}_\mu(n)\leq min\{n, h_\mu(n)+1\}, \\
    g_\mu(n)\leq \hat{g}_\mu(n)\leq g_\mu(n+1).
  \end{gather*}

\end{Lemma}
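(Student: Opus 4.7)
The plan is to approximate $h_\mu$ directly from the representation, accepting the one-unit loss of precision built into the statement. From any representation of $\mu$ one can compute each $\mu\Cyl{\sigma}$ to arbitrary rational precision uniformly in $\sigma$, and therefore compute $M_l := \max_{|\sigma|=l}\mu\Cyl{\sigma}$ to arbitrary precision as well. Since $\sum_{|\sigma|=l}\mu\Cyl{\sigma}=1$ forces $M_l \geq 2^{-l}$, computing each $\mu\Cyl{\sigma}$ to within $2^{-l-2}$ and taking the max yields a rational $q_l$ with $|q_l-M_l|\leq M_l/4$, hence $M_l\in[\tfrac{4}{5}q_l,\tfrac{4}{3}q_l]$. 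The key observation is $h_\mu(l)=\lceil -\log M_l\rceil$; combined with $\log(5/3)<1$, this gives $\lceil -\log(\tfrac{4}{5}q_l)\rceil\in\{h_\mu(l),h_\mu(l)+1\}$.

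I would then set
\[
\hat{h}_\mu(l) \;:=\; \min\Bigl\{\,l,\ \max_{l'\leq l}\bigl\lceil -\log(\tfrac{4}{5}q_{l'})\bigr\rceil\,\Bigr\}.
\]
The inner maximum enforces monotonicity; capping at $l$ is harmless for the lower bound because $M_l\geq 2^{-l}$ already ensures $h_\mu(l)\leq l$. Since $h_\mu$ itself is non-decreasing (Fact~\ref{Fact:gh}(2)), the pointwise bound $\lceil -\log(\tfrac{4}{5}q_{l'})\rceil \leq h_\mu(l')+1 \leq h_\mu(l)+1$ propagates through the max, yielding $\hat{h}_\mu(l)\leq \min\{l,h_\mu(l)+1\}$, while $\hat{h}_\mu(l)\geq \lceil -\log(\tfrac{4}{5}q_l)\rceil\geq h_\mu(l)$. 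A short case analysis on whether the cap is active at $l$ versus $l+1$ confirms that the composition $\min\{l,\cdot\}$ with the running max remains non-decreasing.

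For $\hat{g}_\mu$, I would define $\hat{g}_\mu(n):=\min\{l:\hat{h}_\mu(l)>n+1\}$, which is well defined by Fact~\ref{Fact:gh}(4) and inherits monotonicity from $\hat{h}_\mu$. For $l\geq g_\mu(n+1)$, Fact~\ref{Fact:gh}(3) gives $h_\mu(l)\geq n+2$, so $\hat{h}_\mu(l)>n+1$, establishing $\hat{g}_\mu(n)\leq g_\mu(n+1)$. Conversely, for $l<g_\mu(n)$ the definition of $g_\mu$ supplies some $\sigma$ of length $l$ with $\mu\Cyl{\sigma}\geq 2^{-n}$, whence $M_l\geq 2^{-n}$, $h_\mu(l)\leq n$, and therefore $\hat{h}_\mu(l)\leq n+1$, forcing $\hat{g}_\mu(n)\geq g_\mu(n)$. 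The main obstacle is conceptual rather than computational: because a representation only delivers two-sided rational approximations, the strict-inequality definitions of $h_\mu$ and $g_\mu$ are inherently only semi-decidable near powers of $2$, so the one-unit tolerance in the statement is essentially optimal—once it is granted, everything reduces to elementary arithmetic with floors and ceilings.
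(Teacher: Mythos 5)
Your argument is correct and follows essentially the same strategy as the paper: effectively approximate $\max_{|\sigma|=l}\mu\Cyl{\sigma}$ from a representation (with controlled relative error), convert that into an integer estimate of $h_\mu(l)$ that overshoots by at most $1$, cap at $l$, and then define $\hat g_\mu$ by inverting $\hat h_\mu$ using Fact~\ref{Fact:gh}. If anything, your write-up is a bit more careful than the paper's own proof: taking the running maximum $\max_{l'\le l}$ explicitly enforces the required monotonicity of $\hat h_\mu$, and defining $\hat g_\mu(n)$ via the condition $\hat h_\mu(l)>n+1$ (rather than asking for exact equality with $n+2$) sidesteps the possibility that a particular value of $\hat h_\mu$ is skipped.
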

\begin{proof}
  To define $\hat{h}_\mu$, note that any representation of $\mu$ can effectively find
  an $n$ such that $2^{-n}<\mu([\sigma])<2^{-n+2}$, uniformly for any $\sigma$. Let
  $\hat{h}_\mu(l)$ be the maximum such $n\leq l$ for all $\sigma$ with length $l$.

  Now let $\hat{g}_\mu(n)$ be the minimum $l$ such that $\hat{h}_\mu(l)=n+2$. Since
  $\hat{h}_\mu \geq h_\mu$, it follows from the observation in the proof of
  Fact~\ref{Fact:gh}(5) that $\hat{g}_\mu(n) \leq g_\mu(n+1)$.

  On the other hand, by Fact \ref{Fact:gh}, we have
  $$h(\hat{g}_\mu(n))\leq \hat{h}(\hat{g}_\mu(n))=n+2.$$

  We also know $h_\mu(g_\mu(n))=n+1$, and $h_\mu$ is monotonic, so $h(\hat{g}_\mu(n))\geq g_\mu(n)$.
\end{proof}

A straightforward induction yields the following.

\begin{Corollary}
  \label{Cor:h-0}
  For the function $\hat{h}_\mu$ from Lemma~\ref{Lemma:g*}, we have that for all $l,n\in \N$, $$h^{(n)}_\mu(l)\leq \hat{h}_\mu^{(n)}(l)\leq h^{(n)}_\mu(l)+n.$$
\end{Corollary}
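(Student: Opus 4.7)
The plan is to prove both inequalities simultaneously by induction on $n$, using Lemma~\ref{Lemma:g*} together with the sub-additivity-style bound from Fact~\ref{Fact:gh}(2). The base case $n=0$ is immediate since $h_\mu^{(0)}(l) = l = \hat{h}_\mu^{(0)}(l)$.

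For the inductive step, assume $h_\mu^{(n)}(l) \leq \hat{h}_\mu^{(n)}(l) \leq h_\mu^{(n)}(l) + n$. The lower bound is the easy direction: from Lemma~\ref{Lemma:g*} we have $\hat{h}_\mu \geq h_\mu$ pointwise, and $h_\mu$ is monotone non-decreasing by Fact~\ref{Fact:gh}(2), so I would chain
\[
\hat{h}_\mu^{(n+1)}(l) = \hat{h}_\mu\bigl(\hat{h}_\mu^{(n)}(l)\bigr) \geq h_\mu\bigl(\hat{h}_\mu^{(n)}(l)\bigr) \geq h_\mu\bigl(h_\mu^{(n)}(l)\bigr) = h_\mu^{(n+1)}(l).
\]

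For the upper bound, I would use monotonicity of $\hat{h}_\mu$ and the inductive hypothesis to push $\hat{h}_\mu^{(n)}(l)$ up to $h_\mu^{(n)}(l) + n$, then apply Lemma~\ref{Lemma:g*} to replace the outer $\hat{h}_\mu$ by $h_\mu + 1$:
\[
\hat{h}_\mu^{(n+1)}(l) \leq \hat{h}_\mu\bigl(h_\mu^{(n)}(l) + n\bigr) \leq h_\mu\bigl(h_\mu^{(n)}(l) + n\bigr) + 1.
\]
The key remaining step is to show $h_\mu(k+n) \leq h_\mu(k) + n$ for any $k, n$, which follows by iterating the one-step increment bound $h_\mu(l+1) \leq h_\mu(l)+1$ from Fact~\ref{Fact:gh}(2). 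Applying this with $k = h_\mu^{(n)}(l)$ yields the desired bound $\hat{h}_\mu^{(n+1)}(l) \leq h_\mu^{(n+1)}(l) + n + 1$.

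There is no real obstacle here; the only point that requires attention is ensuring that the extra additive constant in $\hat{h}_\mu \leq h_\mu + 1$ can be absorbed when iterated $n$ times. This is exactly what Fact~\ref{Fact:gh}(2) provides: $h_\mu$ is Lipschitz with constant $1$, so a bounded perturbation of its argument costs only a bounded additive error in the output, and that error accumulates linearly in $n$, matching the statement.
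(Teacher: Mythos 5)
Your induction is correct and fills in exactly the "straightforward induction" the paper gestures at: the lower bound uses $\hat{h}_\mu \geq h_\mu$ plus monotonicity of $h_\mu$, and the upper bound uses monotonicity of $\hat{h}_\mu$, the bound $\hat{h}_\mu \leq h_\mu + 1$ from Lemma~\ref{Lemma:g*}, and the $1$-Lipschitz property $h_\mu(l+1)\leq h_\mu(l)+1$ from Fact~\ref{Fact:gh}(2) to absorb the accumulated additive error. This is the same argument the paper has in mind; there is nothing to add.
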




\medskip
We will now define a new randomness test. The reader should keep in mind our
main aim is to study not the random reals for a measure, but the non-random
reals. In particular, we want to devise a quantitative measure of \emph{how
  non-random} a real is.

The main difference between our test and a regular Martin-L\"{o}f test is how we
weigh cylinders. In Martin-L\"{o}f tests, we set upper bounds on the measure of
a union of cylinders. Thus, for any finite string $\sigma$, its weight is
$\mu\Cyl{\sigma}$ under measure $\mu$. When $\mu$ is Lebesgue measure, strings
with the same length would have the same weight, but this is not generally true
for other measures. However, in our new test, we assign the same weight to
strings with the same length. This means we assign a measure $\mu$ a
corresponding \emph{Hausdorff measure}. The weight of each cylinder is
determined by the dissipation function $h_\mu$. To obtain the desired
stratification, we consider iterates of $h_\mu$. The more we iterate $h_\mu$,
the slower the induced function goes to infinity, and the harder it will be to
cover reals. For technical reasons, we need to multiply by a coefficient that is
also completely determined by $h_\mu$ and the level of iteration. As mentioned
before, we will write $h$ and $\hat{h}$ for $h_\mu$ and $\hat{h}_\mu$, respectively, if
the underlying measure $\mu$ is clear.



\begin{Def}
  \label{Def:level}
  For any continuous measure $\mu$, a \textit{level-$n$ Solovay test for $\mu$}
  is a $\mu$-c.e. sequence $T_n$ of finite binary strings such
  that
  $$\sum_{\sigma \in T_n} (h^{(n)}(\mid\sigma\mid))^{\log n}2^{-h^{(n)}(\mid\sigma\mid)}<\infty.$$

  We say $A \in \Cant$ \emph{fails} $T_n$ if $A\in \Cyl{\sigma}$ for infinitely
  many $\sigma \in T_n$. We say $A$ is \textit{non-$\mu$-random of level $n$} if
  it fails some level-$n$ randomness test for $\mu$, and we say $A$ is
  \textit{non-$\mu$-random of level $\omega$} if it is non-$\mu$-random of level $n$ test for all
  natural numbers $n$.
\end{Def}

Please note that the level of a test  defined as above has nothing to do with what sometimes called the level of a Martin-L\"{o}f test (i.e., the $n$-th uniformly c.e.\ set in a Martin-L\"{o}f test).  In our definition, it is a parameter which used to measure how non-random a real is with respect to a specific continuous measure. In the following, we assume, without loss of generality, that all tests are infinite.

If $\mu$ is Lebesgue measure, we have $h_\mu(n) = n$ and thus,
$$\sum_{\sigma \in T_n} (h^{(n)}(\mid\sigma\mid))^{\log
      n}2^{-h^{(n)}(\mid\sigma\mid)}=\sum_{\sigma \in T_n}\mid\sigma\mid^{\log n}2^{-\mid\sigma\mid},$$
so in this case a level-$1$ Solovay test coincides with the standard notion of a Solovay test\cite[6.2.7]{downey2010algorithmic}

\medskip
We next establish some basic properties of the new test notion.
The following Lemma follows easily by analyzing the derivative.
\begin{Lemma}
  \label{Le:mono}
  The function $f(x):=x^{\log n}2^{-x}$ is decreasing to 0 for $x>\log n$ as $x$ goes to infinity.
\end{Lemma}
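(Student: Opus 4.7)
The plan is a single-variable calculus argument that inspects the sign of $f'$ and then checks the behavior at infinity.

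First I would apply the product rule and factor:
$$f'(x) = (\log n)\,x^{\log n - 1}\,2^{-x} \;-\; (\ln 2)\,x^{\log n}\,2^{-x} \;=\; x^{\log n - 1}\,2^{-x}\bigl(\log n - (\ln 2)\,x\bigr).$$
The prefactor $x^{\log n - 1}\,2^{-x}$ is strictly positive for $x > 0$, so the sign of $f'(x)$ is controlled entirely by the linear factor $\log n - (\ln 2)\,x$. This is negative exactly when $x > (\log n)/\ln 2$, and in particular past the threshold $\log n$ (up to the constant $1/\ln 2 > 1$), $f$ is strictly decreasing.

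Second, I would verify that $f(x)\to 0$ as $x\to\infty$. This is the standard fact that exponential decay beats polynomial growth; the cleanest route is to take logarithms and observe $\ln f(x) = (\log n)\ln x - (\ln 2)\,x \to -\infty$. Together with positivity of $f$ and the eventual monotonicity above, this delivers the ``decreasing to $0$ from above'' conclusion.

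I do not anticipate any real obstacle. The only subtle point is reconciling the threshold $\log n$ in the statement with the exact critical point $(\log n)/\ln 2$ of $f$: either one reads the bound $\log n$ as absorbing this multiplicative constant, or one replaces it with a slightly larger (still $O(\log n)$) threshold, which will not affect any downstream application of the lemma.
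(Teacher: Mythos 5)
Your proposal is correct and follows exactly the route the paper gestures at (``follows easily by analyzing the derivative''); the derivative computation, the factoring, and the check that $\ln f(x)\to-\infty$ are all sound. You are also right to flag that the true critical point is $x^*=(\log n)/\ln 2$, which is strictly larger than $\log n$ for $n\geq 2$, so the lemma as stated is off by the multiplicative constant $1/\ln 2\approx 1.44$; this is harmless because every downstream use applies the lemma at arguments exceeding something like $n+\log n$ or $2n+\log 2n$, which comfortably clear the corrected threshold.
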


We first show that $\mu$-computable reals are non-$\mu$ random of level $\omega$.

\begin{Proposition} \label{prop:computable_omega}
  If a real $A$ is computable in $\mu$, then $A$ is non-$\mu$ random of level $\omega$ for all continuous measures $\mu$.
\end{Proposition}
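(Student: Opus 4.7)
My plan is to exhibit, for each natural number $n$, an explicit level-$n$ Solovay test $T_n$ that $A$ fails. The basic idea is simple: since $A$ is $\mu$-computable, all initial segments $A \uhr l$ are uniformly $\mu$-computable, and every such cylinder contains $A$. So the task reduces to selecting lengths $l_1 < l_2 < \cdots$ sparsely enough that $\{A \uhr l_k : k \geq k_0\}$ has finite Solovay weight, and $\mu$-computably enough that the resulting set of strings is $\mu$-c.e.

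To make the sum $\sum_k (h^{(n)}(l_k))^{\log n} 2^{-h^{(n)}(l_k)}$ converge, I would like $h^{(n)}(l_k) \geq k$ for all sufficiently large $k$. Then by Lemma~\ref{Le:mono}, once $k > \log n$, the function $f(x) = x^{\log n} 2^{-x}$ is decreasing in $x$, so $f(h^{(n)}(l_k)) \leq f(k) = k^{\log n} 2^{-k}$, and $\sum_k k^{\log n} 2^{-k}$ converges by the ratio test. The obstacle is that $h$ is only $\mu$-c.e., not $\mu$-computable, so I cannot directly $\mu$-compute a length with $h^{(n)}(l) \geq k$ on the nose. This is where Lemma~\ref{Lemma:g*} and Corollary~\ref{Cor:h-0} come in: set $l_k$ to be the least $l$ with $\hat{h}^{(n)}(l) \geq k+n$. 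Since $\hat{h}^{(n)}(l) \leq h^{(n)}(l)+n$ by Corollary~\ref{Cor:h-0}, this forces $h^{(n)}(l_k) \geq k$; since $\hat{h}^{(n)}$ is $\mu$-computable and tends to infinity (iterating Fact~\ref{Fact:gh}(4) and the lower bound $\hat h \geq h$), $l_k$ is $\mu$-computable and well-defined.

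Putting everything together, I would define $T_n := \{A \uhr l_k : k \geq k_0\}$ for some fixed $k_0 > \log n$. Since $A$ is $\mu$-computable and the $l_k$ are $\mu$-computable, $T_n$ is $\mu$-computable, hence $\mu$-c.e.; the convergence of the Solovay sum follows from the bound above. Finally, $A \in \Cyl{A \uhr l_k}$ for every $k$, so $A$ fails $T_n$. As $n$ was arbitrary, $A$ is non-$\mu$-random of level $n$ for every $n$, i.e., of level $\omega$. The only genuinely delicate step is the passage from $h$ to $\hat h$ with the correct offset, which is exactly what Corollary~\ref{Cor:h-0} is designed to handle.
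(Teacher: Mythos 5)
Your proposal is correct and takes essentially the same approach as the paper: both replace $h$ by the $\mu$-computable $\hat h$ from Lemma~\ref{Lemma:g*}, control the offset under iteration via Corollary~\ref{Cor:h-0}, and invoke the monotonicity of $x^{\log n}2^{-x}$ from Lemma~\ref{Le:mono} to make the Solovay sum converge. The paper enforces convergence by requiring each term to be below $2^{-i}$, whereas you define the lengths $l_k$ explicitly so that $h^{(n)}(l_k)\geq k$ and bound each term by $k^{\log n}2^{-k}$; this is only a bookkeeping difference.
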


\begin{proof}
  If $A$ is a $\mu$-computable real, then we can compute arbitrary long initial segments of $A$ from any representation of $\mu$. By Fact \ref{Fact:gh}(2) and Lemma~\ref{Lemma:g*}, the $\mu$-computable function $\hat{h}(l)$ is non-decreasing, $h(l)\leq \hat{h}(l)\leq h(l)+1$, and $\lim_{l\rightarrow \infty}\hat{h}(l)$ and $\lim_{l\rightarrow \infty}h(l)$ are both infinite. Then for any natural number $n$, if $\sigma$ is an initial segment of $A$ and $\hat{h}^{(n)}(\mid\sigma\mid)$ is greater than $n+\log n$, by Lemma \ref{Le:mono} and Corollary \ref{Cor:h-0}, we have  the following inequality:
  $$(h^{(n)}(\mid\sigma\mid))^{\log n}2^{-h^{(n)}(\mid\sigma\mid)}\leq (\hat{h}^{(n)}(\mid\sigma\mid)-n)^{\log n}2^{-\hat{h}^{(n)}(\mid\sigma\mid)+n}.$$

  So, for fixed $n$, let $\{\sigma_i\}$ be a $\mu$-computable sequence of initial segments of $A$ such that the following two inequalities are satisfied, for all $i \in \omega$:
  \begin{gather*}
    (\hat{h})^{(n)}(\mid\sigma_i\mid)>n+\log n, \\
    (\hat{h}^{(n)}(\mid\sigma_i\mid)-n)^{\log n}2^{-\hat{h}^{(n)}(\mid\sigma_i\mid)+n}<2^{-i}.
  \end{gather*}
  Then $\{\sigma_i\}_{i \in \N}$ is a level-$n$ test which covers $A$. Therefore, $A$ is non-$\mu$ random of level $\omega$.
\end{proof}

The next proposition shows the relation between level tests and  Martin-L\"{o}f tests.

\begin{Proposition}
  If a real $A$ is non-$\mu$-random of level $1$, then $A$ is not $\mu$-Martin-L\"{o}f random.
\end{Proposition}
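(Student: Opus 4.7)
The plan is to reduce a level-$1$ Solovay test to an ordinary $\mu$-Solovay test, and then invoke the standard characterization of Martin-Löf randomness via Solovay tests.

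First I would simplify the summability condition in Definition~\ref{Def:level}. Since $\log 1 = 0$, the factor $(h^{(1)}(|\sigma|))^{\log n}$ becomes identically $1$, so a level-$1$ test $T_1$ is simply a $\mu$-c.e. set of strings with
\[
\sum_{\sigma \in T_1} 2^{-h(|\sigma|)} < \infty.
\]
Next I would use the definition of the dissipation function to convert this into a bound on $\mu$-measure. By definition, $h(l) = \max\{n : \forall |\sigma| = l,\ \mu\Cyl{\sigma} < 2^{-n+1}\}$, so for every $\sigma$ we have $\mu\Cyl{\sigma} < 2^{-h(|\sigma|)+1} = 2 \cdot 2^{-h(|\sigma|)}$. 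Summing over $T_1$ gives
\[
\sum_{\sigma \in T_1} \mu\Cyl{\sigma} \;\leq\; 2\sum_{\sigma \in T_1} 2^{-h(|\sigma|)} \;<\; \infty.
\]
Thus $T_1$, viewed as an enumerated family of cylinders, is a $\mu$-Solovay test in the classical sense (relativized to any representation $R_\mu$ of $\mu$, which computes $h$ to any desired precision via $\hat{h}$ from Lemma~\ref{Lemma:g*}).

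Finally I would appeal to the relativized Solovay theorem: for any continuous measure $\mu$ with representation $R_\mu$, a real $A$ lies in infinitely many members of a $\mu$-Solovay test if and only if $A$ is not $R_\mu$-$\mu$-Martin-Löf random. Since by hypothesis $A$ fails $T_1$, meaning $A \in \Cyl{\sigma}$ for infinitely many $\sigma \in T_1$, we conclude $A$ is not $\mu$-Martin-Löf random.

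The only step that requires a moment's care is the first one, namely noticing that $\log 1 = 0$ collapses the level-$1$ weight to $2^{-h(|\sigma|)}$; the rest is a one-line bound and a citation. The Solovay test characterization for arbitrary computable measures (and its straightforward relativization to representations of continuous measures) is a well-known transfer of the classical result and presents no real obstacle.
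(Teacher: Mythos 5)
Your proof is correct and follows essentially the same route as the paper: observe that $\log 1 = 0$ collapses the level-$1$ weight to $2^{-h(|\sigma|)}$, use the defining inequality $\mu\Cyl{\sigma} < 2^{-h(|\sigma|)+1}$ to see that $T_1$ is a $\mu$-Solovay test, and then cite the standard equivalence between Solovay tests and Martin-L\"of tests for probability measures. No differences worth noting.
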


\begin{proof}
  If $n=1$, the sum in Definition~\ref{Def:level} becomes
  \[
    \sum_{\sigma\in T_1} 2^{-h(\mid\sigma_i\mid)}.
  \]
  By the definition of $h$, we have $\mu\Cyl{\sigma} < 2^{-h(\mid\sigma\mid)+1}$, thus any level-$1$ test is a standard Solovay test.
  Moreover, for a probability measure, any real covered by a Solovay test is also
  covered by a Martin-Löf test, see for example~\cite[Theorem~6.2.8]{downey2010algorithmic}.
\end{proof}

Next, we show that the level tests are indeed nested.

\begin{Proposition}
  Every level-$n$ test is also a level-$(n-1)$ test.
\end{Proposition}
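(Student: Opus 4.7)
The plan is to compare the level-$n$ and level-$(n-1)$ summands pointwise: I want to show that for all $\sigma$ of sufficiently large length,
\[
(h^{(n-1)}(|\sigma|))^{\log(n-1)} 2^{-h^{(n-1)}(|\sigma|)} \le (h^{(n)}(|\sigma|))^{\log n} 2^{-h^{(n)}(|\sigma|)}.
\]
Once this pointwise domination is established, the conclusion follows quickly: the $\mu$-c.e.\ condition on $T$ does not involve $n$, and because only finitely many strings in $T$ can have any given bounded length, the convergent level-$n$ sum majorizes the level-$(n-1)$ sum up to a finite additive constant. The statement is only meaningful for $n \ge 2$, so I take $n \ge 2$ throughout.

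Set $a := h^{(n-1)}(|\sigma|)$ and $b := h^{(n)}(|\sigma|) = h(a)$. The first step is to note that $h(l) \le l$ for all $l$, which follows by induction from $h(0) = 0$ and Fact~\ref{Fact:gh}(2); this yields $b \le a$. Next, by Fact~\ref{Fact:gh}(4), $h^{(n)}(l) \to \infty$, so there is a threshold $L$ beyond which $b > \log n$, and the finitely many $\sigma \in T$ with $|\sigma| < L$ contribute only a bounded amount to the level-$(n-1)$ sum.

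For $|\sigma| \ge L$, I would apply Lemma~\ref{Le:mono}: since $x^{\log n} 2^{-x}$ is decreasing on $(\log n, \infty)$ and $\log n < b \le a$, we get $b^{\log n} 2^{-b} \ge a^{\log n} 2^{-a}$. Because $a > \log n \ge 1$, we also have $a^{\log n} \ge a^{\log(n-1)}$, which delivers the pointwise inequality above. Summing over $T$ and absorbing the finite short-string contribution then gives the desired convergence.

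The point requiring care is that dropping the level from $n$ to $n-1$ simultaneously lowers the polynomial exponent (from $\log n$ to $\log(n-1)$) and enlarges the iterate appearing in the argument (from $h^{(n)}$ to $h^{(n-1)}$). These two changes pull in opposite directions on the summand, and Lemma~\ref{Le:mono} is precisely the tool that ensures they resolve in favor of the level-$n$ summand for sufficiently long $\sigma$; the monotonicity in the exponent $c \mapsto a^c$ (valid because $a > 1$) then absorbs the drop in the power.
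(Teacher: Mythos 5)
Your proof is correct and takes essentially the same approach as the paper: establish $h^{(n)}(|\sigma|)\le h^{(n-1)}(|\sigma|)$, note that $h^{(n)}(|\sigma|)\to\infty$ so the relevant threshold is eventually exceeded, then combine Lemma~\ref{Le:mono} with monotonicity of $a\mapsto a^c$ in $c$ (for $a>1$) to bound the level-$(n-1)$ summand by the level-$n$ summand for all but finitely many $\sigma$. The only difference is cosmetic: you first raise the exponent from $\log(n-1)$ to $\log n$ and then apply Lemma~\ref{Le:mono} with exponent $\log n$ on the interval $(\log n,\infty)$, whereas the paper first applies the lemma with exponent $\log(n-1)$ on $(\log(n-1),\infty)$ and then raises the exponent. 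Both orderings are valid; yours is marginally tidier at $n=2$ since it works directly with the threshold $\log n\ge 1$ rather than $\log(n-1)=0$.
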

\begin{proof}
  Assume $\{\sigma_i\}_{i\in \N}$ is a level-$n$ test. By Fact \ref{Fact:gh}(2),
  $$h^{(n-1)}(\mid\sigma_i\mid) \geq h^{(n)}(\mid\sigma_i\mid),$$
  for all $i$. Moreover, $\mid\sigma_i\mid \to \infty$ as $i \to \infty$ since $\{\sigma_i\}$ is a level-$n$ test. By \ref{Fact:gh}(4), this implies that, for all but finitely many $i$,
  $$h^{(n)}(\mid\sigma_i\mid) > \log (n-1).$$

  By Lemma \ref{Le:mono} and the inequalities above, for all but finitely many $i$, the following holds: $$(h^{(n-1)}(\mid\sigma_i\mid))^{\log (n-1)}2^{-h^{(n-1)}(\mid\sigma_i\mid)}<(h^{(n)}(\mid\sigma_i\mid))^{\log( n-1)}2^{-h^{(n)}(\mid\sigma_i\mid)}.$$

  Furthermore, we know $h^{(n)}(\mid\sigma_i\mid)$ is positive and $\log (n-1)<\log n$, so we have
  $$(h^{(n)}(\mid\sigma_i\mid))^{\log (n-1)}2^{-h^{(n)}(\mid\sigma_i\mid)}<(h^{(n)}(\mid\sigma_i\mid))^{\log n}2^{-h^{(n)}(\mid\sigma_i\mid)}.$$

  Finally, since $\{\sigma_i\}_{i\in \N}$ is an level-$n$ test,
  \[
    \sum_{i\in \N} (h^{(n-1)}(\mid\sigma_i\mid))^{\log (n-1)}2^{-h^{(n-1)}(\mid\sigma_i\mid)}<\sum_{i\in \N} (h^{(n)}(\mid\sigma_i\mid))^{\log n}2^{-h^{(n)}(\mid\sigma_i\mid)}<\infty.
  \]
  So  $\{\sigma_i\}_{i\in \N}$ is also a level-$(n-1)$ test.
\end{proof}

The previous results justify thinking of level tests as a hierarchy of
non-randomness for continuous measures.
In particular, we have

\begin{center}
  X is non-$\mu$ random of level $\omega$\\[1ex]
  $\big\Downarrow$ \\[1ex]
  X is non-$\mu$ random of level $n+1$\\[1ex]
  $\big\Downarrow$ \\[1ex]
  X is non-$\mu$ random of level $n$\\[1ex]
  $\big\Downarrow$ \\[1ex]
  X is not $\mu$-random.
\end{center}

\bigskip
It is not too hard to construct a measure for which this hierarchy is
proper (see~\cite{li:thesis}), while for other measures (such as Lebesgue
measure on $\Cant$) it collapses.

One can define a similar hierarchy for NCR instead of for individual measures,
saying that a real $X\in 2^\omega$ is \emph{NCR of level $n$ ($\omega$)} if and only
if $X$ is non-$\mu$ random of level $n$ ($\omega$) for every continuous probability
measure $\mu$. Interestingly, this hierarchy for NCR overall collapses, mostly
due to the correspondence between continuous measures and Hausdorff measures
established by \emph{Frostman's Lemma} (see \cite{reimann2008effectively}).
This is shown in~\cite{li:thesis}.

\section{Constructing non-random r.e.a.\ reals}~
\label{sec:3}

The goal of this section is to construct level-$n$ non-random reals that are r.e.a.\ a given level-$2n$ non-random real $A$. In fact, we can construct such a real in any Turing degree r.e.a.\ $A$.

To this end, we first introduce a general construction technique which builds a real $C$ r.e.a.\  a given real $A$.

The basic idea is to add a large amount of ``1''s between each bit of $B$, where the number of ``1''s is still computable by $B$.

\begin{Construction}
  \label{Cons:1}
  Assume for a given $A$ and a real $B$ r.e.\ above $A$, we have $W^A_e=B$ for some $e$. Without loss of generality, we may assume the first bit of $B$ is ``1"
  and it takes $\Phi^A_e$ only one step to halt on input ``0" with no use of the oracle. We also assume that $B$ is infinite.

  Denote the $i$-th bit of $A$ by $a_i$ and the $i$-th bit of $B$ by $b_i$. By our assumption, $b_0=1$.

  Let
  \[
    m_i = min\{j>i \colon \Phi^A_{e}(j)\downarrow\},
  \]
  that is, $m_i$ is the least element of $B$ which is greater than $i$. Define the function $f: \N\rightarrow\N$ as
  \begin{equation*}
    f(i) = \begin{cases}
      min\{s\mid\forall j\leq m_i(\Phi^A_{e}(j)\downarrow\implies \Phi^A_{e,s}(j)\downarrow)\} & \text{ if } i \in B,   \\
      1                                                                                        & \text{ if } i\notin B.
    \end{cases}
  \end{equation*}

  When $i\in B$, $f(i)$ is the minimum  number such that for all $j\leq m_i$ and
  $j\in B$, $\Phi^A_e(j)$ halts within $f(i)$ many steps. Since $A \leq_T B$,  $f$ is
  $B$-computable. Define a sequence of finite binary strings $C_i$ as follows:
  $$C_i =b^{f(0)}_0 \Conc 0\Conc b^{f(1)}_1\Conc 0\Conc b^{f(2)}_2\Conc 0\Conc \ldots \Conc b^{f(i)}_i.$$
  Let $C=\lim_i C_i$. Since $b_i$ and  $f(i)$ are $B$-computable, so is $C$. On
  the other hand, the first $i$ bits of $B$ are coded in $C_i$: Each block of ones
  corresponds to exactly one element in $B$ less than $i$. Therefore, $C\equiv_T B$.
\end{Construction}

We illustrate Construction \ref{Cons:1} with an example. Let $A$ be a real and
$B=W^A_e$ as in Construction \ref{Cons:1} and let $s_A(n)$ be the settling time of
$\Phi^A_e(n)$. Assume the first few values of $B$ and $s_A$ are as given in the
following table.

\bigskip
\begin{center}
  \begin{tabular}{ c || c | c | c | c | c | c }
    $n$        & $0$                     & $1$                     & $2$                   & $3$                     & $4$                     & \mbox{ } \dots \mbox{ } \\
    \hline
    $\Phi^A_e$ & $\Phi^A_e(0)\downarrow$ & $\Phi^A_e(1)\downarrow$ & $\Phi^A_e(2)\uparrow$ & $\Phi^A_e(3)\downarrow$ & $\Phi^A_e(4)\downarrow$ & \mbox{ } \dots \mbox{ } \\
    $s_A$      & 1                       & 37                      & $\infty$              & 134                     & 28                      & \mbox{ } \dots \mbox{ } \\
  \end{tabular}
\end{center}

\bigskip
Following Construction \ref{Cons:1}, we obtain the first few bits of $C$ as follows.

\bigskip
\begin{center}
  \begin{tabular}{ c || c | c | c | c | c | c }
    $n$ & $0$                                    & $1$                                     & $2$          & $3$                                     & $4$                     & \mbox{ } \dots \mbox{ } \\
    \hline
    B   & 1$^\frown$                             & 1$^\frown$                              & 0$^\frown$   & 1$^\frown$                              & 1$^\frown$              & \mbox{ } \dots \mbox{ } \\
    f   & 37                                     & 134                                     & 1            & 134                                     & \mbox{ } \dots \mbox{ } & \mbox{ } \dots \mbox{ } \\
        & $\Downarrow$                           & $\Downarrow$                            & $\Downarrow$ & $\Downarrow$                            & $\Downarrow$                                      \\
    C   & $\underbrace{1...1}_\text{37}0^\frown$ & $\underbrace{1...1}_\text{134}0^\frown$ & $00^\frown$  & $\underbrace{1...1}_\text{134}0^\frown$ & $1 \dots$               & \mbox{ } \dots \mbox{ } \\
  \end{tabular}
\end{center}

\bigskip
We now show that non-randomness properties of $A$ carry over to $C$. Intuitively,
if we know $\sigma$ is an initial segment of $A$, we can use it to
``approximate" some initial segment of $B$ by waiting for
$\Phi^\sigma_e(*)$ to converge,
until the use exceeds $\sigma$. But we cannot effectively get
any initial segment of $B$ in this way, as we have no upper bound
on the settling time of $\Phi^\sigma_e$, therefore we cannot find a effective
cover of $B$ by using this approximation.

We address this problem in the construction of $C$ by adding long series of ones, thereby decreasing the cost in measure of adding an incorrect string to a test. Consider the case when we use a long enough initial segment of
$A$ to approximate the first $n$ bits of $B$ for $s$ steps, but the
approximation $\tau$ we got for $B$ turns out to be wrong. Let $m$ be the index of the first incorrect bit. Then the settling time of $\Phi^\sigma_e(m)$ must be greater than $s$. By
Construction \ref{Cons:1}, an initial segment of $C$ is of the form
$$b^{f(0)}_0 \Conc 0\Conc b^{f(1)}_1\Conc 0\Conc b^{f(2)}_2\Conc 0\Conc
  \ldots \Conc \underbrace{111\ldots 1}_{\text{more than s}}.$$

By picking a large $s$,
the total measure of all possible strings of the above form is small. Eventually,
we can effectively find a cover of $C$ from any initial segment of $A$.
\begin{Thm}
  \label{Thm:2n-n}
  For any continuous measure $\mu$, if $A$ is non-$\mu$ random of level $2n$, $B$ is r.e.a.\ $A$, and $C$ is obtained from $B$ via Construction~\ref{Cons:1}, then $C$ is non-$\mu$ random of level $n$.
\end{Thm}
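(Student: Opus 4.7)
The plan is to convert a level-$2n$ Solovay test $\{\tau_k\}$ for $\mu$ covering $A$ into a level-$n$ Solovay test for $\mu$ covering $C$. The candidates in the new test are derived from each $\tau_k$ by simulating $\Phi^{\tau_k}_e$ for increasing numbers of steps to obtain tentative approximations $\tilde{B}^{k,s} = \{j : \Phi^{\tau_k}_{e,s}(j) \downarrow\}$ of $B$, applying Construction~\ref{Cons:1} to these to produce tentative initial segments $\tilde{C}^{k,s}$ of $C$, and extending by a run of ones $1^t$ of controlled length; the enumerated candidates are strings of the form $\pi_{k,s} = \tilde{C}^{k,s} \Conc 1^t$.

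For the coverage claim, suppose $\tau_k \prec A$. The key observation highlighted in the sketch preceding the statement is that even when the $\tau_k$-approximation of $B$ at stage $s$ is incorrect at some position $m$, the actual $C$ at the matching position contains a run of at least $s$ consecutive ones: the missed element $m \in B$ has settling time greater than $s$, and hence the $f$-value at the preceding element of $B$, which dictates the length of the preceding block of ones in $C$, must be at least $s$. Hence, for a suitable choice of padding $t$ depending on the stage, $\pi_{k,s}$ is an initial segment of $C$. As $s \to \infty$ for each such $\tau_k$, this produces arbitrarily long initial segments of $C$, and since $A$ extends infinitely many $\tau_k$, $C$ is covered infinitely often.

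For the weight bound, we need $\sum (h^{(n)}(|\pi|))^{\log n} 2^{-h^{(n)}(|\pi|)} < \infty$ over all enumerated candidates. Because $|\pi_{k,s}| \geq t$, controlling $t$ controls the contribution. We restrict the enumeration to a sparse sequence of stages $s_0 < s_1 < \cdots$ for each $\tau_k$, calibrated so that $h^{(n)}(|\pi_{k, s_j}|)$ takes the successive values $h^{(2n)}(|\tau_k|) + j$ for $j = 0, 1, 2, \ldots$. This is feasible because $h^{(2n)} = h^{(n)} \circ h^{(n)}$, so starting the candidate length at $h^{(n)}(|\tau_k|)$ already yields $h^{(n)}(|\pi_{k,s_0}|) \geq h^{(2n)}(|\tau_k|)$. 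By Lemma~\ref{Le:mono}, the inner sum
\[
    \sum_{j \geq 0} (h^{(2n)}(|\tau_k|)+j)^{\log n}\, 2^{-(h^{(2n)}(|\tau_k|)+j)}
\]
is then a geometric-type series dominated by a constant multiple of $(h^{(2n)}(|\tau_k|))^{\log n} 2^{-h^{(2n)}(|\tau_k|)}$, which is bounded by the level-$2n$ weight of $\tau_k$; summing over $k$ finishes the bound.

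The main obstacle will be the precise bookkeeping to satisfy both requirements simultaneously, in particular using the $\mu$-computable approximations $\hat{h}_\mu, \hat{g}_\mu$ from Lemma~\ref{Lemma:g*} (rather than the merely $\mu$-c.e.\ $h_\mu, g_\mu$) to pick stage thresholds and padding lengths effectively, and verifying that the padded candidate $\tilde{C}^{k,s} \Conc 1^t$ really is an initial segment of $C$ in the cases required by coverage (i.e., that the padding does not overrun a block boundary in $C$). The doubling of iteration depth from level $2n$ down to level $n$ provides precisely the slack that makes the scheme work: a single candidate at length $h^{(n)}(|\tau_k|)$ already carries the level-$n$ weight matching the level-$2n$ weight of $\tau_k$, leaving just enough room for a sparse enumeration over stages to capture the infinitely many approximations needed for coverage.
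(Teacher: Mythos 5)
Your high-level plan — simulate $\Phi^{\tau}_e$ on initial segments from the level-$2n$ test, rebuild a tentative $C$-prefix via Construction~\ref{Cons:1}, and pad with a run of ones to exploit the long blocks in $C$ — matches the spirit of the paper's proof, and your observation that the doubling $2n\to n$ supplies exactly enough slack for the extra factor coming from the padding is the right intuition. However, there is a genuine gap in the coverage argument as you have set it up. Your candidates are $\pi_{k,s}=\tilde C^{k,s}\Conc 1^t$, where $\tilde C^{k,s}$ is the \emph{full} output of Construction~\ref{Cons:1} applied to the stage-$s$ approximation $\tilde B^{k,s}$. But when the approximation misses an element of $B$, say the first miss is at position $j$, then $\tilde C^{k,s}$ disagrees with $C$ already at the end of the block preceding $j$: $C$ continues with $1^{f(k_i)}$ (with $f(k_i)>s$) while $\tilde C^{k,s}$ inserts a $0$ after at most $s$ ones, and after that point all block lengths in $\tilde C^{k,s}$ are governed by wrong $m_{i,k}$-values and are essentially garbage. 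Appending $1^t$ at the tail of $\tilde C^{k,s}$ cannot repair a disagreement that occurs in the interior, so in the generic case $\pi_{k,s}$ is simply \emph{not} a prefix of $C$, and your coverage claim fails. You flag only the worry that "the padding does not overrun a block boundary," but the real problem is upstream: the part before the padding is already wrong.

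The paper's proof avoids this by never committing to a single reconstruction: from each test string $\sigma_i$ it runs for exactly $|\sigma_i|$ steps to build one approximation $\tau_i$ of the same length, and then enumerates \emph{all} prefix-truncations $\tau_i\uhr_j\Conc 1^{|\sigma_i|-j}$ for $j=0,\dots,\hat h^{(n)}(|\sigma_i|)$ (plus $\tau_i\uhr_{\hat h^{(n)}(|\sigma_i|)}$ itself). One of these $j$'s lands exactly at the point $\Sigma_{t<k_i}(f_i(t)+1)$ where $\tau_i$ and $C$ last agree, and there $C$ has a block of more than $|\sigma_i|$ ones, so the corresponding padded candidate is a genuine prefix of $C$. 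The cost of enumerating $\hat h^{(n)}(|\sigma_i|)+1$ candidates of length $|\sigma_i|$ is precisely what converts the exponent $\log n$ into $\log 2n$ in the weight bound, which is then absorbed because the input test is level $2n$. To make your proposal work you would have to replace the single padded candidate $\pi_{k,s}$ at each stage by a family that guesses where the approximation first diverges from $C$ — which in effect reconstructs the paper's auxiliary set $t(\sigma,m)$. Running over many stages $s$ is then unnecessary: a single simulation per test string, together with the truncation trick, suffices.
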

\begin{proof}
  We define an auxiliary function $t$ from $2^{<\omega}\x \N$ to finite subsets of $2^{<\omega}$:
  \begin{equation*}
    t(\sigma,n) := \begin{cases}
      \{\sigma\}                                                                             & \text{if $\mid\sigma\mid<n$;}     \\
      \{\sigma\uhr_n\}\cup \bigcup_{i=0}^{n} \{\sigma\!\uhr_i \!\Conc 1^{\mid\sigma\mid-i}\} & \text{if $\mid\sigma\mid\geq n$.}
    \end{cases}
  \end{equation*}

  \begin{Lemma}
    \label{Lemma:2n-n}
    If $\{\sigma_i\}_{i\in \N}$ is a level-$2n$ randomness test of $\mu$, then $$\bigcup_{i\in \N} t(\sigma_i,\hat{h}^{(n)}(\mid\sigma_i\mid))$$ is a level-$n$ randomness test of $\mu$.
  \end{Lemma}
  \begin{proof}[Proof of Lemma \ref{Lemma:2n-n}]
    By Fact \ref{Fact:gh} (4) and Lemma~\ref{Lemma:g*}, we have $$\lim_n \hat{h}(n)\rightarrow \infty.$$ Hence, for fixed $n$ it holds that for all but finitely many $i$, $$\hat{h}^{(2n)}(\mid\sigma_i\mid)> \log 2n+2n.$$

    Fact \ref{Fact:gh} and Lemma~\ref{Lemma:g*} also imply that
    $$\hat{h}^{(n)}(\mid\sigma_i\mid)\leq \mid\sigma_i\mid.$$

    Therefore, for all $i$,
    $$t(\sigma_i,\hat{h}^{(n)}(\mid\sigma_i\mid))=\{\sigma_i\uhr_{\hat{h}^{(n)}(\mid\sigma_i\mid)}\}\cup \bigcup_{j=0}^{\hat{h}^{(n)}(\mid\sigma_i\mid)} \{\sigma_i\uhr_j\!\Conc 1^{\mid\sigma_i\mid-j}\}.$$

    The contribution of $\sigma_i\uhr_{\hat{h}^{(n)}(\mid\sigma_i\mid)}$ to a level-$n$ test is
    \begin{equation*}
      \begin{split}
        & (h^{(n)}(\mid\sigma_i\uhr_{\hat{h}^{(n)}(\mid\sigma_i\mid)}\mid))^{\log n}2^{-h^{(n)}(\mid\sigma_i\uhr_{\hat{h}^{(n)}(\mid\sigma_i\mid)}\mid)}\\
        & \quad = (h^{(n)}({\hat{h}^{(n)}(\mid\sigma_i\mid)}))^{\log n}2^{-(h^{(n)}({\hat{h}^{(n)}(\mid\sigma_i\mid)}))}.
      \end{split}
    \end{equation*}

    By Lemma \ref{Le:mono}, for all but finitely many $i$,
    \begin{equation}
      \begin{split}
        & (h^{(n)}({\hat{h}^{(n)}(\mid\sigma_i\mid)}))^{\log n}2^{-(h^{(n)}({\hat{h}^{(n)}(\mid\sigma_i\mid)}))} \\
        & \quad \leq \; (h^{(2n)}(\mid\sigma_i\mid))^{\log n}2^{-h^{(2n)}(\mid\sigma_i\mid)}\\
        & \quad \leq \; (h^{(2n)}(\mid\sigma_i\mid))^{\log 2n}2^{-h^{(2n)}(\mid\sigma_i\mid)}.
      \end{split}
      \tag{*} \label{equ1}
    \end{equation}

    Moreover, the contribution of $\bigcup_{j=0}^{\hat{h}^{(n)}(\mid\sigma_i\mid)} \{\sigma_i\uhr_j\!\Conc 1^{\mid\sigma_i\mid-j}\}$ to a level-$n$ test is
    \begin{equation*}
      \begin{split}
        & \sum_{j=0}^{\hat{h}^{(n)}(\mid\sigma_i\mid)}(h^{(n)}(\mid\sigma_i\uhr_j\!\Conc 1^{\mid\sigma_i\mid-j}\mid))^{\log n}2^{-h^{(n)}(\mid\sigma_i\uhr_j\!\Conc 1^{\mid\sigma_i\mid-j}\mid)}\\
        & \quad =\;(\hat{h}^{(n)}(\mid\sigma_i\mid)+1)((h^{(n)}(\mid\sigma_i\mid))^{\log n}2^{-h^{(n)}(\mid\sigma_i\mid)})\\
      \end{split}
    \end{equation*}

    By Corollary~\ref{Cor:h-0}, for all but finitely many $i$, we have $$(\hat{h}^{(n)}(\mid\sigma_i\mid)+1)<2\cdot h^{(n)}(\mid\sigma_i\mid).$$

    Therefore
    \begin{equation*}
      \begin{split}
        &(\hat{h}^{(n)}(\mid\sigma_i\mid)+1)((h^{(n)}(\mid\sigma_i\mid))^{\log n}2^{-h^{(n)}(\mid\sigma_i\mid)})\\
        & \quad \leq \;  2\cdot h^{(n)}(\mid\sigma_i\mid)((h^{(n)}(\mid\sigma_i\mid))^{\log n}2^{-h^{(n)}(\mid\sigma_i\mid)})\\
        & \quad = \;2\cdot (h^{(n)}(\mid\sigma_i\mid))^{\log 2n}2^{-h^{(n)}(\mid\sigma_i\mid)}.
      \end{split}
    \end{equation*}

    By Fact \ref{Fact:gh}, $h^{(n)}(\mid\sigma_i\mid)\geq h^{(2n)}(\mid\sigma_i\mid)$ and $\lim_i h(i)=\infty$. Together with Lemma \ref{Le:mono}, for all but finitely many $\sigma_i$, we have the following upper bound.

    \begin{equation}
      \begin{split}
        & 2\cdot (h^{(n)}(\mid\sigma_i\mid))^{\log 2n}2^{-h^{(n)}(\mid\sigma_i\mid)}\\
        & \quad \leq \; 2\cdot (h^{(2n)}(\mid\sigma_i\mid))^{\log 2n}2^{-h^{(2n)}(\mid\sigma_i\mid)}.
      \end{split}
      \tag{**} \label{equ2}
    \end{equation}

    Together, equations \eqref{equ1} and \eqref{equ2} yield the following upper
    bound for the contribution of $t(\sigma_i,\hat{h}^{(n)}(\mid\sigma_i\mid))$ to a
    level-$n$ test:
    \begin{equation*}
      \begin{split}
        &(h^{(n)}(\mid\sigma_i\uhr_{\hat{h}^{(n)}(\mid\sigma_i\mid)}\mid))^{\log n}2^{-h^{(n)}(\mid\sigma_i\uhr_{\hat{h}^{(n)}(\mid\sigma_i\mid)}\mid)} \\
        & \qquad \qquad + \; \sum_{j=0}^{\hat{h}^{(n)}(\mid\sigma_i\mid)}(h^{(n)}(\mid\sigma_i\uhr_j\!\Conc 1^{\mid\sigma_i\mid-j}\mid))^{\log n}2^{-h^{(n)}(\mid\sigma_i\uhr_j\!\Conc 1^{\mid\sigma_i\mid-j}\mid)}
        \\
        & \quad \leq \; (h^{(2n)}(\mid\sigma_i\mid))^{\log 2n}2^{-h^{(2n)}(\mid\sigma_i\mid)} + 2\cdot (h^{(2n)}(\mid\sigma_i\mid))^{\log 2n}2^{-h^{(2n)}(\mid\sigma_i\mid)}\\
        & \quad \leq \;  3\cdot(h^{(2n)}(\mid\sigma_i\mid))^{\log 2n}2^{-h^{(2n)}(\mid\sigma_i\mid)}.
      \end{split}
    \end{equation*}

    Hence if $\{\sigma_i\}_{i\in \N}$ is a level-$2n$ test,  $\bigcup_{i\in \N} t(\sigma_i,h_0^{(n)}(\mid\sigma_i\mid))$ is a level-$n$ test.
  \end{proof}

  \medskip
  We continue the proof of Theorem~\ref{Thm:2n-n}.
  Assume $\{\sigma_i\}_{i\in \N}$ is a level-$2n$ test that $A$ fails. For each $i$, consider the set $W^{\sigma_i}_{e,\mid\sigma_i\mid}$.
  Write the characteristic sequence of $W^{\sigma_i}_{e,\mid\sigma_i\mid}$ as $b_{i,0}\, b_{i,1}\, b_{i,2}\, ...\, b_{i,\mid\sigma_i\mid}$, and put $b_{i,\mid\sigma_i\mid+1}=1$ for convenience. For $k\leq \mid\sigma_i\mid$, define $m_{i,k}:=\min\{j>k\mid b_{i,j}=1\}$, and define the function $f_i:\{1,2,3,...,\mid\sigma_i\mid\}\rightarrow\N$ as
  \begin{equation*}
    f_i(k) = \begin{cases}
      1                                                                             & \text{if $b_{i,k}=0$;}                                          \\
      \min\{l\mid\forall j\leq m_{i,k}(b_{i,j}=1\implies W^{\sigma_i}_{e,l}(j)=1)\} & \text{if$(b_{i,k}=1) \land (m_{i,k}\neq \mid\sigma_i\mid+1)$; } \\
      \mid\sigma_i\mid                                                              & \text{if$(b_{i,k}=1) \land (m_{i,k}=\mid\sigma_i\mid+1)$. }
    \end{cases}
  \end{equation*}

  Lastly, define $$\tau_i =b_{i,0}^{f_i(0)}\Conc 0\Conc b_{i,1}^{f_i(1)}\Conc 0\Conc b_{i,2}^{f_i(2)}\Conc 0\Conc \ldots \Conc b_{i,\mid\sigma_i\mid}^{f_i(\mid\sigma_i\mid)}\uhr_{\mid\sigma_i\mid}.$$

  Since $\mid\tau_i\mid=\mid\sigma_i\mid$, $\{\tau_i\}_{i\in \N}$ is also a level-$2n$ test. By Lemma~\ref{Lemma:2n-n}, $\bigcup_{i\in \N} t(\tau_i,\hat{h}^{(n)}(\mid\tau_i\mid))$ is a level-$n$ test.

  \medskip
  \textbf{Claim:} $C$ fails the test $\bigcup_{i\in \N} t(\tau_i,\hat{h}^{(n)}(\mid\tau_i\mid))$.

  \medskip
  We will show that if $\sigma_i\sqsubset A$, $t(\tau_i,\hat{h}^{(n)}(\mid\tau_i\mid))$ contains an initial segment of $C$.

  By the assumption on $B$ in Construction~\ref{Cons:1}, we have $b_{i,0}=1$ for all $i$. Since we assume $\sigma_i\sqsubset A$, it follows that for any $a\leq \mid\sigma_i\mid$, $b_{i,a}=1$ implies $b_a=1$.

  If $\tau_i\uhr_{\hat{h}^{(n)}(\mid\tau_i\mid)}$ is an initial segment of $C$, then by the definition of $t$, $C$ trivially fails the test.
  So let us assume $\tau_i\uhr_{\hat{h}^{(n)}(\mid\tau_i\mid)}$ is not an initial segment of $C$. Define
  $$k_i=\max\{l\mid\forall j< l(b_{i,j}=b_j)\land (b_{i,l}=1)\}.$$

  Thus, $k_i$ is the maximal length for which $b_{i,k_i}=1$ and
  $$b_0b_1b_2...b_{k_i-1}=b_{i,0}b_{i,1}b_{i,2}...b_{i,k_i-1}.$$

  Then for any $k<k_i$, by the definition of $f_i$, we have $f_i(k)=f(k)$. As we assumed $\tau_i\uhr_{\hat{h}^{(n)}(\mid\tau_i\mid)}$ is not an initial segment of $C$, by comparing lengths, we know that
  $$k_i<\hat{h}^{(n)}(\mid\tau_i\mid).$$

  Let $j$ be the minimum number such that $b_j\neq b_{i,j}$, thus $b_j=1$, $b_{i,j}=0$ and $k_i<j<\hat{h}^{(n)}(\mid\tau_i\mid)$. We have that
  $$\Phi^{A}_{e,\mid\sigma_i\mid}(j)=\Phi^{\sigma_i}_{e,\mid\sigma_i\mid}(j)=b_{i,j}=0$$
  $$\Phi^{A}_{e,f(k_i)}(j)=b_{j}=1.$$

  This means $f(k_i)\geq \mid\sigma_i\mid$, so we can find an element of $t(\tau_i,\hat{h}^{(n)}(\mid\tau_i\mid))$ which is also an initial segment of $C$ as follows.
  \begin{equation*}
    \begin{split}
      & \tau_i\uhr_{\Sigma_{t=0}^{k_i-1}(f_i(t)+1)}\Conc 1^{\mid\sigma_i\mid-\Sigma_{t=0}^{k_i-1}(f_i(t)+1)}\\
      & \quad =b_{i,0}^{f_i(0)}\Conc 0\Conc b_{i,1}^{f_i(1)}\Conc 0\Conc \ldots \Conc b_{i,k_i-1}^{f_i(k_i-1)}\Conc 0\Conc 1^{\mid\sigma_i\mid-\Sigma_{t=0}^{k_i-1}(f_i(t)+1)}\\
      & \quad \sqsubset b_{i,0}^{f_i(0)}\Conc 0\Conc b_{i,1}^{f_i(1)}\Conc 0\Conc \ldots \Conc b_{i,k_i-1}^{f_i(k_i-1)}\Conc 0\Conc 1^{\mid\sigma_i\mid}\\
      & \quad \sqsubset b_{i,0}^{f_i(0)}\Conc 0\Conc b_{i,1}^{f_i(1)}\Conc 0\Conc \ldots \Conc b_{i,k_i-1}^{f_i(k_i-1)}\Conc 0\Conc 1^{f(k_i)}\\
      & \quad = b^{f(0)}_0\Conc 0\Conc b^{f(1)}_1\Conc 0\Conc b^{f(2)}_2\Conc 0\Conc\ldots \Conc b^{f(k_i-1)}_{k_i-1}\Conc 0\Conc b_{k_i}^{f(k_i)}\\
      &\quad \sqsubset C.
    \end{split}
  \end{equation*}

  It follows that $C$ is covered by the level-$n$ test $\bigcup_{i\in \N} t(\tau_i,\hat{h}^{(n)}(\mid\tau_i\mid))$ and therefore non-$\mu$-random of level $n$. This completes the proof of Theorem~\ref{Thm:2n-n}.
\end{proof}

\section{Constructing non-random reals using a self-modulus}~
\label{sec:4}

We begin this section by reviewing the concepts of modulus and self-modulus.

\begin{Def}
  For any function $f,g:\N\rightarrow\N$, we say $f$ \textit{dominates} $g$ if $f(n)>g(n)$ for all but finitely many $n\in \N$. For any real $A$, we say a function $f$ is a \textit{modulus (of computation)} for $A$ if every function dominating $f$ can compute $A$. We say A has a \textit{self-modulus} if there is a modulus $f_A$ of $A$ such that  $f_A\equiv_T A$.
\end{Def}

Arguably the best-known class of reals with a self-modulus is $\Delta^0_2$, see,
for example,~\cite[Theorem~5.6.6]{soare2016turing}.






\medskip
Our second construction method will take real $A$ with a self-modulus $f_A$ and define another real $B\equiv_T A$.

\begin{Construction}~
  \label{Cons:2}
  Assume $A=a_0 \, a_1 \, a_2 \, a_3 \dots$ and $f_A\equiv_T A$ is a self-modulus of $A$. Without loss of generality, we can assume $f_A(n)$ is increasing.

  We define our first string $B_0$ as
  $$B_0 =1^{f_A(0)}\Conc 0\Conc a_0,$$

  and inductively put
  $$B_{n+1} =B_n\Conc 1^{f_A(\mid B_n\mid)}\Conc 0\Conc a_{n+1}.$$

  Let
  $$B =\lim_{i\rightarrow \infty} B_i.$$

  In the following, $l_n$ will denote the length of $B_n$.

  \bigskip
  As each $a_i$ is coded into $B_i$ immediately following a block of the form $1^{f_A(\mid B_i\mid)}\Conc 0$, it follows that that $A\leq_T B$. Since the $B_i$ are uniformly computable in $A$, $B\leq_T A$. Therefore, $B\equiv_T A$.
\end{Construction}

We have the following property of Construction \ref{Cons:2}.

\begin{Thm}
  \label{Thm:modulelevelinfty}
  If $A$ has a self-modulus $f_A$ and $B$ is defined from $A$ and $f_A$ as in
  Construction~\ref{Cons:2}, then $B$ is non-$\mu$ random of level $\omega$ for
  any continuous $\mu$.
\end{Thm}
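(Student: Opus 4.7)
The plan is to split into two cases. If $A$ is $\mu$-computable, then so is $B$ (since $B\le_T A$ by Construction~\ref{Cons:2}), and Proposition~\ref{prop:computable_omega} applies directly. Otherwise $A$ is not $\mu$-computable, and because $f_A$ is a modulus for $A$, no $\mu$-computable function can dominate $f_A$: any such dominator would, together with a representation of $\mu$, compute $A$.

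Fix the level $n$. I will build a level-$n$ Solovay test that captures $B$. Choose a $\mu$-computable, non-decreasing $M:\N\to\N$ with $M(l)>l$ and $\hat h_\mu^{(n)}(M(l)) \ge l + (\log n + 2)\log(l+1)$; for instance take $M(l)$ to be the least such value, using $\hat g_\mu^{(n)}$ as in Lemma~\ref{Lemma:g*}. Define
\[ T_n = \{\, \sigma\Conc 1^{M(l)-l} : l\in\N,\ \sigma\in 2^l \,\}, \]
which is $\mu$-c.e. By Corollary~\ref{Cor:h-0}, $h^{(n)}(M(l)) \ge \hat h^{(n)}(M(l)) - n \ge l + (\log n+2)\log(l+1) - n$, and Lemma~\ref{Le:mono} then bounds the summand $2^l(h^{(n)}(M(l)))^{\log n}2^{-h^{(n)}(M(l))}$ by $O((l+1)^{-2})$, which is summable.

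The crux is that $B$ fails $T_n$. Since the 1-block following $B_s$ occupies positions $l_s$ through $l_{s+1}-3$, the string $B\uhr_{l_s}\Conc 1^{M(l_s)-l_s}\in T_n$ is a prefix of $B$ exactly when $M(l_s)\le l_{s+1}-2$, equivalently $f_A(l_s)\ge M(l_s)-l_s-1$. It therefore suffices to establish this inequality for infinitely many $s$. Suppose it fails for all $s\ge s_0$; then $l_{s+1}\le M(l_s)+1$ for $s\ge s_0$, so iterating $M$ starting from the constant $l_{s_0}$ produces a $\mu$-computable non-decreasing sequence $(\tilde l_s)$ with $\tilde l_s\ge l_s$. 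Using the trivial bound $l_s\ge s$, for every $m\ge l_{s_0}$ the unique $s=\pi(m)$ with $l_{s-1}<m\le l_s$ satisfies $\pi(m)\le m$, and monotonicity of $f_A$ (WLOG by Construction~\ref{Cons:2}) gives
\[ f_A(m)\le f_A(l_{\pi(m)}) < M(l_{\pi(m)}) \le M(\tilde l_{\pi(m)}) \le M(\tilde l_m). \]
Hence $\tilde F(m):=M(\tilde l_m)$, patched with a constant below $l_{s_0}$, is a $\mu$-computable function dominating $f_A$, contradicting the modulus property.

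The main obstacle will be this last step: the failure hypothesis only bounds $f_A$ on the sparse sequence $(l_s)$, yet we must produce a $\mu$-computable dominating function on all of $\N$. Monotonicity of $f_A$ combined with the crude estimate $l_s\ge s$ does the job, but arranging the chain of inequalities so that the iterated $\mu$-computable bound $\tilde l_m$ ends up inside the argument of $M$ is the delicate point. The weight bookkeeping and the $\mu$-computability of $M$ are routine consequences of Corollary~\ref{Cor:h-0} and Lemma~\ref{Le:mono}.
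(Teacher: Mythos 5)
Your argument is correct and essentially mirrors the paper's proof: the same split on whether some $\mu$-computable function dominates $f_A$, the same test design (pad each string of length $l$ with a $\mu$-computably long block of ones), and the same iterate-to-a-contradiction argument showing that the $1$-blocks of $B$ infinitely often reach past the test's cutoff. The only cosmetic differences are bookkeeping: the paper appends $1^{\hat g_\mu^{(k)}(2l)}$ directly and derives the contradiction via a $\mu$-computable $G$ with $G(i)\ge l_i > f_A(i)$, whereas you encode the same bound through an inverse $M$ of $\hat h_\mu^{(n)}$ with a poly-log margin and route the contradiction through $f_A(m)\le M(\tilde l_m)$.
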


\begin{proof}
  Let $\mu$ be a continuous measure. If there is a $\mu$-computable function
  dominating $f_A$, then $\mu$ can compute $B$ as well as $A$, so $B$ is not
  $\mu$-random of level $\omega$. Therefore, let us assume there is no
  $\mu$-computable function dominating $f_A$. As before, we write $g$ and $h$ to
  denote the granularity and dissipation function $g_\mu$ and $h_\mu$,
  respectively.

  \begin{Lemma}
    \label{Le:moduleDominite}
    If there is no $\mu$-computable function dominating $f_A$, then for any $k\in
      \N$, there are infinitely many $n$ such that $\hat{g}^{(k)}(2l_n+1)<f_A(l_n)$,
    where $l_n$ is the length of $B_n$ as defined in Construction~\ref{Cons:2} and
    $\hat{g}$ is as defined in Lemma \ref{Lemma:g*}.
  \end{Lemma}

  \begin{proof}[Proof of Lemma \ref{Le:moduleDominite}]
    Suppose for a contradiction there is an $n_0$ such that for any $m>n_0$, it holds that
    $$\hat{g}^{(k)}(2l_m+1)>f_A(l_m).$$

    Define a function $G$ as follows. Put
    $G(0) ={\hat{g}}^{(k)}(2l_{n_0}+1)$ and inductively define
    $G(i+1) =G(i)+{\hat{g}}^{(k)}(2G(i)+1)+2$. Since $\hat{g}$
    is computable in $\mu$, $G\leq_T \mu$.

    We claim that $G(i) \geq l_i$ for $i\geq n_0$.
    For $i=n_0$, $$G(n_0)> G(0)=\hat{g}^{(k)}(2l_{n_0}+1)\geq l_{n_0}.$$

    For $i \geq n_0$, if $G(i)>l_i$,
    \[
      \begin{split}
        G(i+1) & =G(i)+\hat{g}^{(k)}(2G(i)+1)+2 \\
        & >l_i+\hat{g}^{(k)}(2l_i+1)+2>l_i+f_A(l_i)+2=l_{i+1}.
      \end{split}
    \]
    So $G(i) \geq l_i$ for $i\geq n_0$.
    Moreover, by the definition of $B_i$, $l_i > f_A(i)$ for all $i$.



    Combining the previous two facts, we obtain a $\mu$-computable function $G$ such that
    $G(i) \geq f_A(i)$ for $i \geq n_0$, contradicting the assumption that there is no $\mu$-computable function dominating $f_A$. So there are infinitely many $n$ such that  $${\hat{g}}^{(k)}(2l_n+1)<f_A(l_n).$$
  \end{proof}

  To complete the proof of Theorem~\ref{Thm:modulelevelinfty}, for any $k\in\N$,  we define the following set of strings:
  $$T_k =\{\sigma\Conc 1^{\hat{g}^{(k)}(2\mid\sigma\mid)}\mid\sigma\in 2^{<\omega}\}.$$

  Then
  \begin{equation*}
    \begin{split}
      & \sum_{\tau\in T_k} (h^{(k)}(\mid\tau\mid))^{\log k}2^{-h^{(k)}(\mid\tau\mid)} \\
      & \quad =\sum_{i=0}^\infty 2^i(h^{(k)}(i+\hat{g}^{(k)}(2i)))^{\log k}2^{-h^{(k)}(i+\hat{g}^{(k)}(2i))} \\
      & \quad =\sum_{i> \log k} 2^i(h^{(k)}(i+\hat{g}^{(k)}(2i)))^{\log k}2^{-h^{(k)}(i+\hat{g}^{(k)}(2i))} + \gamma_k,
    \end{split}
  \end{equation*}
  where
  \[
    \gamma_k = \sum_{i \leq \log k} 2^i(h^{(k)}(i+\hat{g}^{(k)}(2i)))^{\log k}2^{-h^{(k)}(i+\hat{g}^{(k)}(2i))} < \infty.
  \]

  Moreover, by Fact \ref{Fact:gh} and Lemma \ref{Lemma:g*},
  $$h^{(k)}(i+\hat{g}^{(k)}(2i)) \geq h^{(k)}(\hat{g}^{(k)}(2i))\geq h^{(k)}(g^{(k)}(2i))\geq 2i.$$

  By Lemma \ref{Le:mono}, we have
  \begin{align*}
    \begin{split}
      &\sum_{i> \log k} 2^i(h^{(k)}(i+\hat{g}^{(k)}(2i)))^{\log k}2^{-h_{\mu}^{(k)}(i+\hat{g}^{(k)}(2i))} + \gamma_k \\
      & \quad \leq \sum_{i> \log k}^\infty 2^i(2i)^{\log k}2^{-2i}+ \gamma_k\\
      & \quad = \sum_{i> \log k} (2i)^{\log k} 2^{-i}+ \gamma_k <\infty.\\
    \end{split}
  \end{align*}

  Thus, $T_k$ is a level-$k$ test. Finally, when $\hat{g}^{(k)}(2l_n+1)<f_A(l_n)$, we have
  $$B_n\Conc 1^{\hat{g}^{(k)}(2l_n)}\sqsubset B_n\Conc 1^{f_A(l_n)}\sqsubset B.$$

  By the definition of $T_k$, any string of the form $B_n\Conc
    1^{\hat{g}^{(k)}(2l_n)}$ is in $T_k$.  By Lemma~\ref{Le:moduleDominite}, for any
  $k$, $\hat{g}^{(k)}(2l_n+1)<f_A(l_n)$ is true for infinitely many $n$.
  Therefore, $B$  fails $T_k$. Since $k$ was arbitrary, $B$ is non-$\mu$-random of level $\omega$.
\end{proof}

\section{Turing degrees of NCR Reals}~
\label{sec:5}

Using the constructions presented in the previous two sections, we exhibit a large class of Turing degrees that contain NCR elements, as formulated in the Introduction.


\begin{Def}
  A real is \textit{1-REA} if it is recursively enumerable. A real is \textit{$(n+1)$-REA} if it is r.e.a.\ some $n$-REA real. A Turing degree is $n$-REA if it contains an $n$-REA real.
\end{Def}

\mainone*

\begin{proof}
  By Proposition~\ref{prop:computable_omega} and Theorem~\ref{Thm:2n-n}, every $1$-REA degree contains an NCR real. Part (a) now follows inductively using Theorem~\ref{Thm:2n-n}. Part (b) follows from Theorem~\ref{Thm:modulelevelinfty}.
\end{proof}

The result actually holds in a slightly stronger form in that both kind of
degrees contain NCR reals \emph{of level $\omega$}, that is, reals that are
non-$\mu$-random of level $\omega$ for every continuous measure $\mu$ (see
~\cite{li:thesis}). However, for our main applications the form stated here is
quite sufficient.

\medskip
Since every $\Delta^0_2$ degree has a self-modulus, we obtain

\maintwo*

Furthermore, if a real $B$ has a self-modulus, by using the relativized version of Shoenfield's Limit Lemma, we can prove the above result also holds for any $\Delta^0_2(B)$ real above $B$, so we have the following.

\begin{Corollary} \label{cor:delta02-NCR}
  If a real $B$ has a self-modulus, then every $\Delta^0_2(B)$ real above $B$ contains an NCR element.
\end{Corollary}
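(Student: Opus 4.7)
The plan is to reduce the corollary to Theorem~\ref{Thm:main1}(b) by showing that any $A$ satisfying the hypotheses itself has a self-modulus. Once that is established, the conclusion is immediate, so the whole argument becomes a self-modulus construction for $\Delta^0_2(B)$ reals above $B$.

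First, I would invoke the $B$-relativized Shoenfield Limit Lemma to fix a $B$-computable sequence of total $\{0,1\}$-valued functions $\{A_s\}_{s\in\N}$ with $\lim_s A_s(n)=A(n)$ for every $n$, and then define the settling-time function
\[
m_A(n)=\min\{s\colon \forall t\geq s,\ A_t(n)=A(n)\}.
\]
Because the approximation is $B$-computable and $A\geq_T B$, we have $m_A\leq_T A$. Writing $f_B$ for a fixed self-modulus of $B$, I would then set
\[
f_A(n)=\max\bigl(f_B(n),\,m_A(n)\bigr),
\]
which is computable from $A$ since $f_B\leq_T B\leq_T A$.

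The key step is verifying that $f_A$ is in fact a modulus of $A$. Suppose $g$ dominates $f_A$; then $g$ dominates $f_B$, so $B\leq_T g$ by the self-modulus property of $f_B$. Moreover $g$ dominates $m_A$, so for almost all $n$ we have $A_{g(n)}(n)=A(n)$. Using the oracle $B$ (now recovered from $g$) to simulate the $B$-computable approximation through stage $g(n)$ recovers $A(n)$ for almost all $n$, and the finitely many exceptions are absorbed into finite patching, giving $A\leq_T g$. Thus $f_A\equiv_T A$ is a self-modulus of $A$, and Theorem~\ref{Thm:main1}(b) then delivers an NCR real in the Turing degree of $A$.

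I do not anticipate a genuine obstacle. The only subtlety is that domination is guaranteed only on a cofinite set, which is precisely why $f_A$ must incorporate $f_B$ and not merely $m_A$: without the $f_B$ component, a dominating $g$ would let us run the approximation but would not deliver the oracle $B$ needed to actually simulate it. Everything else is routine relativization of the $\Delta^0_2$ self-modulus argument.
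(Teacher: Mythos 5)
Your overall strategy is exactly the paper's: show that any $\Delta^0_2(B)$ real $A \geq_T B$ has a self-modulus, then quote Theorem~\ref{Thm:main1}(b). However, the crucial step in your argument contains a gap. You define
\[
m_A(n)=\min\{s\colon \forall t\geq s,\ A_t(n)=A(n)\}
\]
and assert that $m_A\leq_T A$ ``because the approximation is $B$-computable and $A\geq_T B$.'' This does not follow. Having $A\geq_T B$ lets $A$ simulate the approximation $\{A_s\}$ and lets $A$ know its own bits, but the condition $\forall t\geq s\ (A_t(n)=A(n))$ is a $\Pi^0_1(B)$ predicate, and deciding it (hence computing the least witness $s$) in general requires the jump $B'$, which $A$ need not compute. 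The first stage at which the approximation is \emph{momentarily} correct is $A$-computable, but that is not a modulus; the first stage after which the approximation is \emph{permanently} correct is a modulus, but it is only $\Delta^0_2(B)$, not visibly $\leq_T A$. For a nonmonotone $B$-computable approximation these two quantities genuinely differ, and a poorly chosen approximation can make the true settling time strictly above $A$ in Turing degree.

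What you actually need is the (relativized) Modulus Lemma, not merely the Limit Lemma: one must \emph{choose} a $B$-computable approximation to $A$ for which the settling time is computable from $A\oplus B$. That is the content of the result the paper invokes (Soare, Theorem~5.6.6, relativized to $B$), and it is not an automatic consequence of having some $B$-computable approximation. The rest of your argument — incorporating $f_B$ so that a function dominating $f_A$ recovers the oracle $B$ needed to run the approximation, and absorbing the finitely many exceptions by finite patching — is fine and matches the intended proof, but without establishing $m_A\leq_T A$ the construction of the self-modulus is incomplete. To repair it, replace the ad hoc settling time by the modulus delivered by the relativized Modulus Lemma (which yields $m \equiv_T A\oplus B \equiv_T A$ directly), or give the nontrivial construction of a $B$-computable approximation whose true settling time is $A$-computable.
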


We can also apply our techniques to prove the existence of weakly generic reals in NCR.
\begin{Thm}\label{thm:ncr_1generic}
  For every self-modulus degree above $0'$, there exists a weakly $1$-generic NCR real in it.
\end{Thm}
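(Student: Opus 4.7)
The plan is to modify Construction~\ref{Cons:2} so that the resulting real is also weakly $1$-generic, exploiting the hypothesis $\mathbf{a}\ge\mathbf{0}'$ to power the genericity. Fix $A$ in the given self-modulus degree with self-modulus $f_A\equiv_T A$. Since $A\ge_T 0'$, any $A$-computable function pointwise larger than $f_A$ is still a modulus of $A$ (anything dominating the replacement dominates $f_A$), so I may assume $f_A$ is strictly increasing and dominates every $0'$-computable function. In particular, letting $\beta(n,l)$ denote the least extension length into the $n$-th dense $\Sigma^0_1$ set $D_n$ of an arbitrary string of length $l$---a $0'$-computable quantity---we may arrange $\beta(n,l)\le f_A(l)$ whenever $l\ge n$. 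The enumeration $(D_n)_{n\in\N}$ of the dense $\Sigma^0_1$ sets is $0'$-computable, hence $A$-computable.

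Build $B$ in stages. Set $B_0$ equal to the empty string, and at stage $n$ with $l_n:=|B_n|$ use $A$ to compute the shortest $\sigma_n\supseteq B_n$ with $\sigma_n\in D_n$, then put
\[
  B_{n+1} \;:=\; \sigma_n \Conc 1^{f_A(|\sigma_n|)} \Conc 0 \Conc a_n,
\]
and let $B=\lim_n B_n$. Since $\sigma_n\sqsubset B$ and $\sigma_n\in D_n$, the real $B$ meets every dense $\Sigma^0_1$ set of strings and is weakly $1$-generic; and $B\le_T A$ is immediate. For $A\le_T B$, the point is that because $f_A$ dominates $\beta$, each NCR block $1^{f_A(|\sigma_n|)}$ is eventually longer than the whole of $B_n$, so the NCR blocks can be located from $B$ alone as the unique strictly increasing subsequence of ``new-record'' maximal $1$-runs of $B$; the bit $a_n$ is then read off as the bit immediately past the $0$-delimiter closing the $n$-th such block.

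The main obstacle is verifying that $B$ is non-$\mu$-random of level $\omega$ for every continuous $\mu$, i.e., adapting Lemma~\ref{Le:moduleDominite}. As in Theorem~\ref{Thm:modulelevelinfty}, if $f_A$ is dominated by a $\mu$-computable function then $A$, and hence $B$, is $\mu$-computable and Proposition~\ref{prop:computable_omega} applies. Otherwise, the new recursion $|\sigma_{n+1}|\le|\sigma_n|+f_A(|\sigma_n|)+2+\delta_{n+1}$ with $\delta_{n+1}\le f_A(|\sigma_n|+f_A(|\sigma_n|)+2)$ carries an extra genericity term compared with the recursion in Construction~\ref{Cons:2}, and the direct analogue of Lemma~\ref{Le:moduleDominite} produces an implicit inequality that the original induction cannot close. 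My plan to overcome this is to apply Lemma~\ref{Le:moduleDominite} not to $f_A$ itself, but to the $A$-computable thickened modulus $F(l):=f_A(l)+f_A(l+f_A(l)+2)$, which remains a modulus of $A$ (because $F\ge f_A$) and for which the clean recurrence $|\sigma_{n+1}|\le|\sigma_n|+F(|\sigma_n|)+2$ of Construction~\ref{Cons:2} holds. The lemma then yields, for each $k$, infinitely many $n$ with $\hat{g}^{(k)}(2|\sigma_n|+1)<F(|\sigma_n|)$, whence infinitely often either $f_A(|\sigma_n|)$ itself, or else the next-stage block length $f_A(|\sigma_{n+1}|)\ge f_A(|\sigma_n|+f_A(|\sigma_n|)+2)$, exceeds a constant fraction of $\hat{g}^{(k)}(2|\sigma_n|+1)$. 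Absorbing that constant fraction into the level parameter---so that $B$ fails a level-$k'$ Solovay test of the form used in the proof of Theorem~\ref{Thm:modulelevelinfty} for some $k'=k'(k)$ that tends to infinity with $k$---then gives non-$\mu$-randomness of level~$\omega$ for $B$. This last absorption is where I expect to do the bulk of the technical bookkeeping.
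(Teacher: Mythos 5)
Your construction and overall strategy match the paper's: interleave the genericity requirements with the self-modulus blocks of Construction~\ref{Cons:2}, then argue that the resulting real is still covered by the tests $T_k$ from the proof of Theorem~\ref{Thm:modulelevelinfty}. You are right that the paper's terse assertion (``the proof of Theorem~\ref{Thm:modulelevelinfty} also can be applied'') hides a real issue: once the $\sigma_n$ are allowed to extend into the $W_n$, the recurrence $l_{n+1}=l_n+f_A(l_n)+2$ on which Lemma~\ref{Le:moduleDominite} rests is replaced by $|\sigma_{n+1}|=|\sigma_n|+f_A(|\sigma_n|)+2+\delta_{n+1}$ with an uncontrolled $\delta_{n+1}$, so the auxiliary function $G$ no longer obviously dominates the $|\sigma_n|$. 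Identifying this is the right instinct.

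However, your fix does not close the gap. First, two preliminary slips. The set of indices of \emph{dense} $\Sigma^0_1$ sets is $\Pi^0_2$, not $0'$-computable; you must instead run through all $W_n$ and ask $0'$ at each stage whether $B_n$ has an extension in $W_n$, exactly as the paper does. Second, the blanket claim that one ``may assume $f_A$ dominates every $0'$-computable function'' is false when $A\equiv_T 0'$ (no $0'$-computable function dominates all $0'$-computable functions). What you actually need, namely $f_A(l)\ge\max_{n\le l}\beta(n,l)$, is a single $0'$-computable, hence $A$-computable, lower bound, and you can enforce it by replacing $f_A$ with the pointwise maximum of $f_A$ and that function; so this part is repairable, but as written it is wrong.

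The essential problem is the final ``absorption.'' Applying Lemma~\ref{Le:moduleDominite} to the thickened modulus $F(l)=f_A(l)+f_A(l+f_A(l)+2)$ only yields, for infinitely many $n$, the inequality $\hat{g}^{(k)}(2|\sigma_n|+1)<F(|\sigma_n|)$. But to make $B$ fail a test of the form $\{\sigma\Conc 1^{\hat{g}^{(k')}(2|\sigma|)}\}$ you need $\hat{g}^{(k')}(2|\sigma_n|)<f_A(|\sigma_n|)$, i.e.\ a bound in terms of $f_A(|\sigma_n|)$, the length of the $1$-block that actually follows $\sigma_n$ in $B$. The quantity $F(|\sigma_n|)$ is not within any constant factor of $f_A(|\sigma_n|)$: the ratio $F(l)/f_A(l)=1+f_A(l+f_A(l)+2)/f_A(l)$ is unbounded (wildly so when $f_A$ is fast-growing, which is the regime one must handle). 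So the step ``either $f_A(|\sigma_n|)$ or $f_A(|\sigma_{n+1}|)$ exceeds a constant fraction of $\hat{g}^{(k)}(2|\sigma_n|+1)$'' is not available, and ``absorbing a constant fraction into the level parameter'' has nothing to absorb. Passing the over-large term to the next stage does not help either, because at stage $n+1$ you must compare $f_A(|\sigma_{n+1}|)$ against $\hat{g}^{(k')}(2|\sigma_{n+1}|+1)$, which is again much larger than $\hat{g}^{(k)}(2|\sigma_n|+1)$. This is the one step that genuinely needs a new idea, and the proposal as written leaves it open.
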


\begin{proof}
  Assume $A=a_0 \, a_1 \, a_2 \, a_3 \dots$ and $f_A\equiv_T A$ is a self-modulus of $A$. Without loss of generality, we can assume $f_A(n)$ is increasing. Let $W_n$ be $n$-th $\Sigma^0_1$ set of binary strings.

  We define our first string $B_0$ as
  $$B_0 =1^{f_A(0)}\Conc 0\Conc a_0,$$

  And define $\sigma_i$, $B_i$ inductively as
  \begin{equation*}
    \sigma_i := \begin{cases}
      \text{the smallest such } \tau & \text{if $\exists \tau \in W_i(B_i\Conc 1 \sqsubset \tau)$;} \\
      B_i\Conc 0                     & \text{otherwise.}
    \end{cases}
  \end{equation*}
  $$B_{i+1}:=\sigma_i\Conc 1^{f_A(\mid\sigma_i\mid)}\Conc 0\Conc a_{i+1}.$$

  Finally define $B$ as
  $$B:=\lim_{i\rightarrow \infty} B_i.$$

  Since $A>_T 0'$ A compute all $\sigma_i$, thus compute $B$. And $B$ can effectively recover all $B_i$, So B also compute $A$, thus $A\equiv_T B$.

  Moreover, the proof of Theorem \ref{Thm:modulelevelinfty} also can be applied to the $B$ we constructed here, so $B$ is NCR.

  Lastly we show $B$ is weakly 1-generic. If $W_i$ is a dense $\Sigma_0^1$ set, then $\sigma_i\in W_i$ and $\sigma_i$ is an initial segment of $B$, so $B$ is weakly 1-generic.
\end{proof}

Using similar ideas, one can construct 1-generic NCR reals. It is also possible, albeit more complicated, to construct an NCR real of minimal Turing degree. These constructions are given in~\cite{li:thesis}.

\section{Further applications and open questions}
\label{sec:6}

We can apply the techniques introduced in this paper to address a question
asked by Adam Day and Andrew Marks (private communication).

\begin{Def}
  Two reals $X_1,X_2 \in \Cant$ are \emph{simultaneously continuously random}
  if there exists a real $Z$ and a  measure $\mu$ such that $Z$ computes
  $\mu$ and both $X_1$ and $X_2$ are $\mu$-random relative to $Z$. If such $Z$ and
  $\mu$ do not exist,  $X_1,X_2$ are called \emph{never simultaneously
    continuously random} (NSCR).
\end{Def}

Day and Marks conjectured that $X_1$ and $X_2$ are NSCR if and only if at least one
of them is in NCR. We refute this conjecture by
constructing two reals $X_1$ and $X_2$ such that they are both random with respect to some continuous measure,
but for every measure $\mu$ for which $X_2$ is random, any representation of $\mu$ computes $X_1$.

Let $f(n)$ be a self-modulus of $0'$ and $X_1$ be a $\lambda$-random $\Delta^0_2$ real, where $\lambda$ is Lebesgue measure. It suffices to
find a real $X_2$ which random for some continuous $\mu$ and every  representation of a continuous measure
$\nu$ for which $X_2$ is random can compute a function which dominates $f(n)$.

We define
$$S_0:=\{1^{f(0)}\Conc 0 \Conc x \colon x \in \{0,1\} \}.$$
And
$$S_{n+1}:=\{\sigma \Conc 1^{f(\mid\sigma\mid)}\Conc 0 \Conc x \colon  \sigma \in S_n, x \in \{0,1\} \}.$$
Finally define
$$S:=\{Y \in \Cant \colon \forall n \exists \sigma_n\in S_n(\sigma_n\sqsubset Y)\}.$$
Suppose $\mu$ is a continuous measure with a representation $R_\mu$ that does
not compute any function dominating $f$. An argument similar to the proof of
Theorem~\ref{Thm:modulelevelinfty} yields that the set $T_k$ defined there is a
level-$k$ test. Moreover, by the definition of $S$, every real in $S$ is
covered by $T_k$. Therefore, any element in $S$ can only be random for a
measure all of whose representations compute a function dominating $f$. It
follows that any element of $S$ is NSCR with $X_1$.


It remains to show that there is a element in $S$ which is random with respect
to a continuous measure. This easily follows from the fact that NCR is
countable (see~\cite{reimann2015measures}), but we can give a direct argument
as follows:
It follows from the construction of $S$ that $S$ is a perfect subset of
$\Cant$. By distributing a unit mass uniformly along $S$, we obtain a
continuous measure whose support is $S$ and we can choose any real that is
random with respect to this measure and obtain

\begin{Corollary}
  There are non-NCR reals $X_1$ and $X_2$ which are NSCR.
\end{Corollary}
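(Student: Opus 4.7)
The plan is to unpack the construction already sketched above the corollary: set $X_1$ to be any $\Delta^0_2$ real that is Lebesgue-random (for instance Chaitin's $\Omega$), and choose $X_2$ inside the tree $S$ to be random for a continuous measure supported on $S$. The task then splits into (a) verifying that neither $X_1$ nor $X_2$ lies in NCR, and (b) confirming that the pair is NSCR. The heart of the argument is already present in the discussion preceding the statement and in the proof of Theorem~\ref{Thm:modulelevelinfty}; what remains is to thread these together and to relativize.

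For part (a), $X_1$ is non-NCR by choice. For $X_2$, I would first verify that $S$ is a nonempty perfect $\Pi^0_1(0')$ class: each $\sigma \in S_n$ has exactly the two extensions $\sigma \Conc 1^{f(|\sigma|)} \Conc 0 \Conc 0$ and $\sigma \Conc 1^{f(|\sigma|)} \Conc 0 \Conc 1$ in $S_{n+1}$, and $f \leq_T 0'$. Distributing mass so that each such extension receives half the $\nu$-measure of its parent yields a continuous probability measure $\nu$ with $\operatorname{supp}(\nu) = S$ and $\nu \leq_T 0'$. Choosing any $X_2 \in S$ that is $\nu$-random relative to a $0'$-computable representation of $\nu$ (such reals form a $\nu$-full subset of $S$) shows $X_2$ is non-NCR.

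For part (b), suppose for contradiction that some real $Z$ computes a continuous measure $\mu$ and that both $X_1$ and $X_2$ are $\mu$-random relative to $Z$. Relativizing the proof of Theorem~\ref{Thm:modulelevelinfty} to $Z$, the sets $T_k = \{\sigma \Conc 1^{\hat g_\mu^{(k)}(2|\sigma|)} : \sigma \in 2^{<\omega}\}$ are $Z$-c.e.\ level-$k$ $\mu$-tests, and the relativized version of Lemma~\ref{Le:moduleDominite} implies that if no $Z$-computable function dominates $f$, then every real in $S$, and in particular $X_2$, is covered by $T_k$ for every $k$. Hence some $Z$-computable function must dominate $f$; by the self-modulus property this forces $Z \geq_T 0'$, and therefore $X_1 \leq_T 0' \leq_T Z$. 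Since $\mu$ is continuous, the $Z$-computable sequence of initial segments of $X_1$ gives a Martin-L\"of-$\mu$ test relative to $Z$ covering $X_1$, contradicting $\mu$-randomness of $X_1$ relative to $Z$.

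The one delicate step is the relativization of Lemma~\ref{Le:moduleDominite} and of the test construction $T_k$ to the oracle $Z$, but since their proofs use only the $\mu$-computability of $\hat g_\mu$ and closure under monotone iteration, each step passes through unchanged with ``$Z$-computable'' in place of ``$\mu$-computable.'' No new ideas beyond those in Section~\ref{sec:4} are required.
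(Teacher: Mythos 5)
Your proposal is correct and takes essentially the same approach as the paper's: pick $X_1$ a $\Delta^0_2$ Lebesgue-random real and $X_2 \in S$ random for a uniformly distributed continuous measure on the perfect tree $S$, then relativize Theorem~\ref{Thm:modulelevelinfty} (via Lemma~\ref{Le:moduleDominite} and the tests $T_k$) to conclude that any $Z$ making both $\mu$-random must compute a function dominating $f$, hence compute $0'$ and thereby $X_1$, contradicting $X_1$'s $\mu$-randomness relative to $Z$. The details you fill in — perfectness of $S$, $\nu \leq_T 0'$, and the passage of the lemma through relativization — are exactly those the paper leaves implicit.
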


%
%
Theorem~\ref{Thm:modulelevelinfty} can in fact be used to construct a whole sequence of mutually NSCR reals. This answers a question posed by Yu Liang.

\begin{Thm}
  There exists a countable sequence of mutually NSCR reals.
\end{Thm}

\begin{proof}
  For any positive natural number $n$, let $f_n$ be a self-modulus function of $0^{(n)}$. We also define a measure $\mu_n$ on $\Cant$ by requiring
  \[
    \mu_n\Cyl{a_0\Conc 0^{f_n(0)} \Conc a_1\Conc 0^{f_n(1)} \Conc \ldots \Conc a_i}=2^{-i-1},
  \]
  where the $a_0,a_1,a_2,...a_i$ are arbitrary bits in $\{0,1\}$.
  Since $0^{(n)}$ computes (a representation of) $\mu_n$, there exists a $0^{(n+1)}$-computable real $X_n$ random for $\mu_n$.
  Moreover, if $X_n$ is random in $\mu_n$, it must be of the form
  \begin{equation} \label{equ:X_n-prefix} \tag{*}
    a_0\Conc 0^{f_n(0)}\Conc a_1\Conc 0^{f_n(1)} \Conc \ldots \Conc a_i\Conc 0^{f_n(i)}\Conc \ldots
  \end{equation}
  for a sequence of $a_i$ in  $\{0,1\}$, otherwise it would be contained in a $\mu_n$-null cylinder.

  We claim the $\{X_n\}_{n\in \N}$ are mutually NSCR. We show this by contradiction. Assume there are natural numbers $m< n\in \N$, a real real $Z$ and a measure $\mu$ with a $Z$-computable representation, such that $X_m$ and $X_n$ are both $\mu$-random relative to $Z$. Since $X_n$
  is of the form \eqref{equ:X_n-prefix},
  by the same argument as in the proof of Theorem~\ref{Thm:modulelevelinfty},
  $Z$ must compute a function that dominates $f_n$, thus $Z$ computes $0^{(n)}$.

  Since $X_m$ is $0^{(m+1)}$-computable and $m<n$, it follows that $X_m$ is $Z$-computable, and hence cannot be $\mu$-random relative to $Z$, contradiction.
\end{proof}

\section{Questions and conclusion}
The exact distribution of NCR reals in $\Delta^1_1$ remains unknown. Taking
into account the results of this paper, the following questions seem
particularly interesting.



Following the results of Section~5, we can ask how strong the
relation between $\Delta^1_1$ degrees containing NCR reals degrees with a
self-modulus is. In particular, does the following hold:
\begin{quote}
  \em If $\mathcal{D}$ contains an NCR real, must $\mathcal{D}$ have a
  self-modulus?
\end{quote}
If the answer to this question is negative, then we can ask a weaker one:
\begin{quote}
  \em If $\mathcal{D}$ contains a real that is NCR of level $\omega$, must
  $\mathcal{D}$ have a self-modulus?
\end{quote}
On the other hand, our results only concern the \emph{existence} of \emph{some}
NCR elements in Turing degrees, while \cite{barmpalias2012k} shows that
\emph{all} reals in an incomplete r.e.\ degree are NCR. Thus, we may also ask:
\begin{quote}
  \em Are there any other Turing degree not below any incomplete r.e.\ degree in
  which every real is in NCR?
\end{quote}

\bigskip
As NCR is $\Pi^1_1$ set of reals, it has a $\Pi^1_1$ rank function (see for
example~\cite{Kechris:1995a}). It is an open problem to find a ``natural'' rank
function for NCR which reflects the stratified complexities of elements in NCR
in a more informative way. Such a rank function is arguably needed to shed more
light on the structure of NCR in the Turing degrees.
Theorem~\ref{thm:ncr_1generic} immediately implies that a rank based on the
Cantor-Bendixson derivative will not work -- NCR is a proper superset of the
members of countable $\Pi^0_1$ classes. (This follows also from the
Barmpalias-Greenberg-Montalbán-Slaman result~\cite{barmpalias2012k}, of
course.)

Restricted to $\Delta^0_2$, the picture is a little clearer. We now know that
every
$\Delta^0_2$ Turing degree contains an NCR real (Corollary~\ref{cor:delta02-NCR}),
and every degree below an incomplete r.e.\ degree is completely
NCR~\cite{barmpalias2012k}. Moreover, using the connection between the
granularity function and the settling function, it is possible to show that
$\operatorname{NCR}\cap \Delta^0_2$ is an arithmetic set of
reals~\cite{reimann-slaman:unpub}\footnote{A proof of this result can be found in~\cite{li:thesis}.}. Unfortunately, few of the techniques developed so
far (including the ones developed in this paper) seem to extend easily higher
up the arithmetic hierarchy. The question whether, for example,
$\operatorname{NCR}\cap \Delta^0_2$ is arithmetic remains open.
\bibliography{sn-bibliography}


\end{document}